\newcommand{\ba}{\begin{array}}
\newcommand{\ea}{\end{array}}
\newcommand{\bal}{\begin{align}}
\newcommand{\eal}{\end{align}}
\newcommand{\be}{\begin{equation}}
\newcommand{\beqn}{\begin{equation}}
\newcommand{\eeqn}{\end{equation}}
\newcommand{\bea}{\begin{eqnarray}}
\newcommand{\eea}{\end{eqnarray}}
\newcommand{\benum}{\begin{enumerate}}
\newcommand{\eenum}{\end{enumerate}}
\newcommand{\bi}{\begin{itemize}}
\newcommand{\ei}{\end{itemize}}
\newcommand{\bean}{\begin{eqnarray*}} 
\newcommand{\eean}{\end{eqnarray*}}   
\def\diag{\mbox{\rm Diag}}
\def\I{\rm I\!\!I}
\def\les{\leqslant}
\def\ges{\geqslant}
\def\les{\leqslant}
\def\ges{\geqslant}
\def\ms{\medskip}
\def\bde{\begin{definition}\label}
\def\ede{\end{definition}}
\def\be{\begin{equation}}
\def\bel{\begin{equation}\label}
\def\ee{\end{equation}}
\def\bt{\begin{theorem}\label}
\def\et{\end{theorem}}
\def\bc{\begin{corollary}\label}
\def\ec{\end{corollary}}
\def\bl{\begin{lemma}\label}
\def\el{\end{lemma}}
\def\bp{\begin{proposition}\label}
\def\ep{\end{proposition}}
\def\bas{\begin{assumption}\label}
\def\eas{\end{assumption}}
\def\br{\begin{remark}\label}
\def\er{\end{remark}}
\def\bex{\begin{example}\label}
\def\ex{\end{example}}
\def\ba{\begin{array}}
\def\ea{\end{array}}
\newtheorem{theorem}{Theorem}
\newtheorem{definition}[theorem]{Definition}
\newtheorem{lemma}[theorem]{Lemma}
\newtheorem{corollary}[theorem]{Corollary}
\newtheorem{proposition}[theorem]{Proposition}
\newtheorem{example}[theorem]{Example}
\newtheorem{remark}{Remark}
\begin{document}

\title{\LARGE\bf Social Optima in Mean Field Linear-Quadratic-Gaussian Control with Volatility Uncertainty
}


\author{Jianhui Huang\thanks{J.~Huang is with the Department of Applied Mathematics,
The Hong Kong Polytechnic University, Hong Kong
({majhuang@polyu.edu.hk}).},\quad
Bing-Chang Wang\thanks{B.-C.~Wang is with the School of Control Science and Engineering, Shandong University, Jinan 250061, P. R. China.
  ({bcwang@sdu.edu.cn}).},\quad
and Jiongmin Yong\thanks{J.~Yong is with the Department of Mathematics
University of Central Florida Orlando, FL 32816 ({jiongmin.yong@ucf.edu}). This author was supported in part by NSF DMS-1812921.}
  }


\maketitle
%

\begin{abstract}
This paper examines mean field linear-quadratic-Gaussian (LQG) social optimum control with volatility-uncertain common noise. The diffusion terms in the dynamics of agents contain an unknown volatility process driven by a common noise. We apply a robust optimization approach in which all agents view volatility uncertainty as an adversarial player. Based on the principle of person-by-person optimality and a \emph{two-step-duality} technique for stochastic variational analysis, we construct an auxiliary optimal control problem for a representative agent. Through solving this problem combined with a consistent mean field approximation, we design a set of decentralized strategies, which are further shown to be asymptotically social optimal by perturbation analysis.
 \end{abstract}


\section{Introduction}

\subsection{Social optimum control by mean-field analysis}
The large population (LP) systems have been found wide applications across a broad spectrum including: economics, biology, engineering, and social science \cite{BFY13, C14, GS13, weintraub2008markov}. The most significant character of LP system, is the interactive weakly-coupling structure across considerable agents: each individual influence on whole system is negligible, but their overall population impact is substantial and cannot be ignored. Recently, dynamic decisions of LP system become much more important along with recent rapid growth of practical decision system exhibiting large-scaled interactive behaviors. The mean field game (MFG) has drawn intensive research attention because it provides an effective theoretical scheme to analyze asymptotic behavior of \emph{controlled} LP systems. Note that LP system studied by MFG involves numerous \emph{competitive} players (or agents) endowed with competitive cost functionals.  

MFG has been extensively studied from various viewpoints on either linear-quadratic-Gaussian (LQG) setting \cite{HCM07, LZ08, WZ13, MB17} or general nonlinear one \cite{HMC06, LL07, CD13}. In addition, MFG with common noise is motivated by a variety of scenarios in finance and economics such as \emph{system risk} \cite{CDL14, GLL11}.

Apart from noncooperative MFG, social optimum control by mean field analysis has also drawn increasing attention recently. The social optimum problem refers to a LP system in which all players cooperate to optimize some common social cost---the sum of individual costs. Social optima are linked to a type of team decision \cite{H80} but with highly complex interactions. All agents in team decision access different information sets, thus social optima are \emph{decentralized} and differ from classical vector optimization with centralized designer. When player number $N\longrightarrow +\infty,$ some mean-field team-optimization problem is inspired to study the asymptotic behavior of LP system with two approaches along this line: the direct method \cite{LL07, HZ19} and fixed-point method. We list few relevant works for the second one. Huang \emph{et al.} consider social optima in mean field LQG control, and provide an asymptotic team-optimal solution \cite{HCM12}.
Wang and Zhang investigate a mean field social optimal problem in which a Markov jump parameter appears as a common source of randomness for all agents \cite{WZ16}. 
For further literature, see \cite{AM15} for team-optimal control with finite population and partial information,
and \cite{SNM18} for the dynamic 
collective choice by finding a social optimum.

\subsection{Volatility uncertainty with common noise}
Motivated by the aforementioned studies, the present study explores a class of robust cooperative mean field social optimum problems. Specifically, we focus on team optimization in an LQG setup with symmetric minor agents, driven by common noise but with uncertainty in its volatility term. More details of the motivation behind our problem are presented as follows.


\ms

In \cite{HH16}, the authors investigate mean field models with a \emph{global} uncertainty term, which means that all players share a common unknown deterministic disturbance. 
They adopted the ``soft constraint" approach \cite{BB95,E06} by removing the bound of disturbance while the effort is simultaneously penalized in cost function. The studies \cite{TBT13, MB17} consider the case where each agent is paired with its \emph{local} disturbance, and provide an $\epsilon$-Nash equilibrium by tackling a Hamilton-Jacobi-Isaacs equation combined with fixed-point analysis. Another study relevant to our work is \cite{WH17} which presents robust analysis of mean-field social control with uncertain drift only. Because of the absence of volatility uncertainty therein, a closed-loop strategy with a consistency condition is still admissible in terms of a standard Riccati equation. In addition, asymptotic social optimality could still be verified in \cite{WH17} directly based on a stationary condition of the strategy specified by Riccati equations obtained.

\ms

Unlike \cite{WH17}, this paper is devoted to volatility uncertainty of social optimum control in mean field LQG setup with common noise. Notice that various studies of mathematical finance (e.g., pricing and hedging \cite{ALP95, L95, MN18}) have remarkably focused on markets with uncertain volatility.
In \cite{Buff02}, uncertain volatility models are introduced to evaluate a scenario where the volatility coefficient of the pricing model cannot be determined exactly. Therefore, a practical motivation here is, in many decision problems, a large number of coupled decision markers share a common noise but with uncertain volatility on it. For instance, volatility of trading prices in a financial market is often unknown and the \emph{implied volatility} has thus been inspired and well-studied. Subsequently, when some cooperative investors concern their team optima, 
it becomes necessary to study the social optimization with volatility uncertainty. Another example is system risk minimization in an inter-banking system: all branches (of team formation) are subject to some uncertainty in common system noise thus robust volatility analysis arises when seeking optima in joint operations.  So, it is worthwhile to study cooperative mean field model with volatility uncertainty \cite{BB95, Buff02, LPVD83}. Moreover, for \emph{linear} dynamics (e.g., wealth process in Merton model), their volatilities are often inexact by allowing some modeling errors; thus, when some \emph{quadratic hedging} is considered, the LQG setup is suggested and we adopt it here.

\subsection{The analysis outline and comparison}

At the first glance, this present study seems somewhat similar to our previous study \cite{WH17} and \cite{HH16}. However, various subtle and essential differences exist between them, in both setup and analysis. We highlight some key differences below for more clear comparison.

\ms

(i) Our present study examines the uncertainty of team optimization; thus, a variational analysis should be conducted to test the response of related componentwise Fr\'{e}chet differentials for a given agent.
Such an analysis is not required in \cite{HH16} when studying the uncertainty of mean field games when all agents are competitive.

\ms

(ii) In team optimization, a key step is to verify (uniform) convexity of social cost functional, that is high-dimensional. For team optimization (e.g., \cite{HCM12}) with standard assumption (SA), such convexity follows directly because the SA weights are all positive (nonnegative) definite. However, it becomes more challenging in present study because some weights are intrinsically indefinite due to soft constraint and min-max setup here. Even though negative weight is also addressed in \cite{HH16} but (uniform) convexity therein is more tractable: only low-dimensional optimization needs to be treated in \emph{competitive} game context. More precisely, in \cite{HH16}, we need only to consider perturbation for a given single agent to verify the approximate Nash equilibrium by fixing other agents' strategies. However, the present study must consider team perturbation for all agents instead single one only; thus, the convexity involved is high-dimensional and indefinite, which becomes more technical to be checked.

\ms

(iii) Uncertainty (disturbance) in \cite{HH16} is postulated to be \emph{deterministic} on the \emph{drift} term only. Thus, the related consistency condition by the fixed-point argument, reduces to a forward-backward ordinary differential equation (FBODE), for which the well-posdness is more tractable. For instance, \emph{the compatibility method} in \cite{MY99} still works in \cite{HH16} to such FBODE, but fails here to the more complicated forward-backward stochastic differential equation (FBSDE) of consistency condition due to volatility uncertainty.

\ms

(iv) Unlike \cite{WH17} for team optimization with drift uncertainty only, volatility uncertainty imposed here brings more technical difficulties. For example, more subtle estimates for a fully-coupled consistency FBSDE system, especially for its (backward) adjoint solution in common noise component, should be invoked. Moreover, for the auxiliary problem construction for social optimality, the related variational analysis becomes rather involved (see Section 5); furthermore, it differs fairly to that of \cite{WH17} mainly because of common noise and volatility uncertainty. More crucially, a \emph{two-step duality} procedure (see Section 4.2) should be applied and a new type of auxiliary problem is constructed; whereas in \cite{WH17}, only single-step duality is required. In addition, different from \cite{WH17}, the consistency system here requires a new \emph{embedding representation} type.

\ms

To conclude, the main contributions of this paper can be summarized as follows:

\ms

(1) The volatility uncertainty of team optimization on common noise is introduced and formulated in a soft-constraint setting. Two sequential optimization problems are also formulated.

\ms

(2) An auxiliary control problem is constructed via a two-step duality procedure, and the consistency system is obtained through embedding representation of a non-standard mean-field type FBSDE. The related uniform convexity (concavity) is also established in high dimension case.

\ms

(3) We obtain global solvability of related 
FBSDEs in some nontrivial and nonstandard case.

\ms

(4) The decentralized optimal team strategy is derived in an open-loop sense, and its asymptotic social optimality is verified in a robust social sense.

\ms

The rest of this paper is organized as follows. Section 2 formulates the volatility uncertainty with soft constraint; Section 3 discusses the control problem with volatility uncertainty; Section 4 investigates team optimization in person-by-person optimality; based on this, Section 5 designs the decentralized strategies through a consistency condition system; Section 6 analyzes the well-posedness of FBSDEs, which arises from the consistency system; Section 7 presents asymptotic robust social optimality of the decentralized strategy; Section 8 concludes the paper.

%
%
%

%
%
%
%
%
%

\section{Problem formulation}\label{sec2}

We denote by $\mathbb{R}^{k}$ the $k$-dimensional Euclidean space, $\mathbb{R}^{n\times k}$ the set of all $n\times k$ matrices,
{$\otimes$ the Kronecker product}. We use $|\cdot|$ to denote the norm of an Euclidean space, or the Frobenius norm of matrices. For a vector or matrix $M$, $M^T$ denotes its transpose; for two vectors $x,y$, $\langle x,y\rangle=x^Ty$. For symmetric matrix $Q$ and a vector $z$, $|z|_Q^2= z^TQz$, and $Q>0$ ($Q\ges0$) means that $Q$ is positive (nonnegative) definite. 
Consider a finite time horizon $[0,T]$ for $T>0$, for a given filtration $\mathbb{G}\triangleq\{\mathcal{G}_{t}\}_{0\les t\les T}$, denote $L^{2}_{\mathbb{G}}(0, T; \mathbb{R}^\ell)$ ($L_{\mathbb{G}}^2(\Omega;C([0,T];\mathbb{R}^\ell))$) the space of all $\mathbb{R}^\ell$-valued $\mathcal{G}_t$-progressively measurable (continuous) processes $s(\cdot)$ satisfying $\|s\|_{L_2}^2:=\mathbb{E}\int_0^{T}|s(t)|^{2}dt<\infty$ ($\|s\|_{\max}^2:=\mathbb{E}\sup_{0 \les t \les T}|s(t)|^{2}<\infty$). For convenience of presentation, we may use $c$ (or $ c_1, c_2, \ldots$) to denote a generic constant which does not depend on the population size $N$ of LP system, and may vary from place to place.


\ms

Let $(\Omega,
\mathcal F, \mathbb{F}, \mathbb{P})$ be a complete
filtered probability space on which a sequence of independent one-dimensional Brownian motions $\{W_i(t), i=0,1,\cdots, N\}$ are defined, where $\mathbb{F}=\{\mathcal F_t\}_{0 \les t \les T}$ is the natural filtration of $\{W_i(t), i=0,1,\cdots, N\}$ augmented by all the $\mathbb{P}$-null sets in $\mathcal F.$ Consider a linear stochastic LP systems with $N$ agents (or, particles), in which the $i^{th}$-agent $\mathcal{A}_{i}$ evolves by
\begin{eqnarray}\label{eq1}
dx_i(t) &\hspace*{-0.1cm}=\hspace*{-0.1cm}& [Ax_i(t)+Bu_i(t)+ f(t)]dt+[Du_i(t)+\sigma(t)]dW_i(t)
\cr&&+[C_0x_i(t)+D_0u_i(t)+\sigma_0(t)]dW_0(t), \quad x_i(0)=x_0, \quad  i=1,\cdots, N,
\end{eqnarray}
where $x_i(\cdot)$ and $u_i(\cdot)$ are state and input of agent $\mathcal{A}_{i}$, valued in $\mathbb{R}^n$ and $\mathbb{R}^r$, respectively, and $x_0\in \mathbb{R}^n$ is a constant vector; Coefficients $A,B,D,C_0,D_0$ are constant matrices of suitable sizes;
$W_i(\cdot)$ is a Brownian motion representing the idiosyncratic noise for agent $\mathcal{A}_{i}$; while $W_0(\cdot)$ is a Brownian motion representing a common noise shared by all agents (similar setup can be found in \cite{C14, HW14}). For $i=0,1,\cdots, N$, let $\mathbb{F}^{i}=\{{\mathcal F}_t^{i}\}_{0\les t\les T}$ be the natural filtration of $W_i(\cdot)$ augmented by all the ${\mathbb{P}}$-null sets. Then, $\mathbb{F}=\{{\mathcal F}_t\}_{0\les t\les T}=\{\sigma(\bigcup_{i=0}^N {\mathcal F}_t^{i})\}_{0 \les t \les T}$ is called the \emph{centralized information}.
$\sigma_0$ is unknown volatility but note that it might not be only $\mathbb{F}^{0}=\{\mathcal{F}^{0}_{t}\}_{0 \les t\les T}$-adapted with $\mathcal{F}^{0}$ the information generated by common noise $W_0(\cdot).$

\begin{remark}Individual diffusion part of \eqref{eq1} driven by $W_{i}$ does not include term like $Cx_{i}$ as in standard LQ control literature, mainly due to two concerns. First, introduction of $Cx_{i}$ will bring considerable technical difference in relevant analysis and we plan to address it in future work; second, the current setup is still rather general, especially including risky investments as its special case (i.e., $\sigma=0$). For simplicity, we assume that all the agents
have the same initial state. It is not hard to extend our results to the case that initial states of agents are i.i.d. random variables.
\end{remark}

When $D, D_{0} \neq 0,$ control process enters diffusion terms (driven by $W_i(\cdot), W_0(\cdot)$) of (\ref{eq1}), and in this case (\ref{eq1}) is said to be \emph{diffusion-controlled}. The study of diffusion-controlled systems has attracted extensive attentions, mainly because of their modeling power and application potentials in operational research and mathematical finance, etc. The readers may refer to \cite{YZ, BFY13, SY14} for relevant study of LQ diffusion-controlled systems, and related applications in mean-variance and portfolio selection problems. By comparison, the \emph{drift-controlled} (i.e., $D=D_{0}=0, B \neq 0)$ system is more classical in LQ literature and has been broadly-adopted in most mean-field game or team studies (e.g. \cite{HCM07, HCM12, WZ13}). Besides modeling, the diffusion-controlled system also differs from drift-controlled one in relevant analysis, for example, on the study of related Riccati equations and Hamiltonian systems.

Given state dynamics (\ref{eq1}), the cost functional of $\mathcal{A}_{i}$ is given by 
\begin{eqnarray}\label{eq2}
J_i(u)=&\displaystyle
\frac{1}{2}\mathbb{E}\int_0^T\Big\{\big|x_i(t)
-\Gamma x^{(N)}(t)-\eta(t)\big|^2_{Q}
+|u_i(t)|^2_{R}\Big\}dt+\frac{1}{2}\mathbb{E}|x_i(T)-\Gamma_0x^{(N)}(T)-\eta_0|_G^2,
\end{eqnarray}
where 
$x^{(N)}=\frac{1}{N}\sum_{j=1}^Nx_j$ is weakly-coupled state-average, and $u=\{u_1,
\ldots, u_N\} \in \mathbb{R}^{r \times N}$ the team strategy.
The admissible strategy set of  $\mathcal{A}_i$ is in the \emph{distributed} sense: $$
\begin{aligned}
{\cal U}_{i}^r =&\Big\{u_i(\cdot)\in\  L^{2}_{\mathbb{H}^{i}}(0, T; \mathbb{R}^{r}): \mathbb{H}^{i}=\{\mathcal{H}_{t}^{i}\}_{0 \leq t \leq T},  \mathcal{H}^{i}_{t}\triangleq\sigma\big\{{\mathcal F}_t^{0}\cup {\mathcal F}_t^{i}\cup\sigma(x_i(s),0 \leq s\leq t)\big\}\Big\}.
\end{aligned}
$$Here, $\{\mathcal{H}_{t}^{i}\}$ denotes the decentralized (or, distributed) information for the individual agent $\mathcal{A}_{i}$. Note that $x_{i}$ is not $\{\mathcal{F}^{i}_{t}\}$-adapted because of the state-average coupling $x^{(N)};$ thus, the inclusions of $\sigma(x_i(s))$ and ${\mathcal F}_t^{i}$ are both necessary in above formulation.
For comparison, the \emph{centralized strategy} set is:$$\begin{aligned}
{\cal U}_{c}^{r} &=\Big\{u_i(\cdot)\in  L^{2}_{\mathbb F}(0, T; \mathbb{R}^{r})\Big\}.
\end{aligned}
$$
Denote the social cost under volatility with soft constraint by
$$J_{\rm soc}^{(N)}(u,\sigma_0)=\sum_{i=1}^N\left(J_i(u)-\frac{1}{2}\mathbb{E}\int_0^T|\sigma_0(t)|^2_{R_0}dt\right)$$
with $R_{0}$ being the attenuation parameter of soft constraint (see \cite{BB95}).
The main goal of the current paper is to seek a set of distributed strategies to minimize the social cost under soft constraint for system (\ref{eq1})--(\ref{eq2}), i.e., $$\hbox{\textbf{(P)} minimize}_{u_{i}\in {\cal U}_{i}^r}J_{\rm soc}^{\rm wo}(u) \quad \text{with} \quad J_{\rm soc}^{\rm wo}(u)\stackrel{\Delta}{=}\sup_{\sigma_0\in {\cal U}^{n}_{c}}J_{\rm soc}^{(N)}(u, \sigma_0)$$over $\{u=(u_1,\cdots u_i, \cdots, u_{N}), u_i\in {\cal U}_{i}^r, i=1,\cdots,N\},$ where
$J_{\rm soc}^{\rm wo}(u)$ is the social cost under the worst-case volatility.

\ms

To simplify the analysis, we introduce the following hypothesis.

\ms

\textbf{(H1)} The state and cost functional coefficients satisfy:
$$\left\{
\begin{aligned}
&A,  C_0,\Gamma,  \Gamma_0\in \mathbb{R}^{n\times n}, \quad \quad \quad B, D, D_0\in\mathbb{R}^{n\times r}, \\
&Q\ges0,R>0, R_0>0, G\ges0, \quad  \quad f,\sigma,\sigma_0, \eta, \eta_0\in
L_{\mathbb{F}}^2(0,T,\mathbb{R}^n).
\end{aligned}\right.$$
Under (H1), by \cite{YZ}, for any $x_0$ and $u_i \in {\cal U}_c^r$, (\ref{eq1}) admits a unique strong solution $$\textbf{x}^{T}(\cdot)=(x_1^T(\cdot), \cdots, x_i^T(\cdot), \cdots, x^T_{N}(\cdot)) \in L^{2}_{\mathbb{F}}(\Omega;C([0,T];\mathbb{R}^{nN}))$$ with the following estimates hold true: for some $c_{1}$ independent of $N$,
$$\mathbb{E}\sup_{0 \les t \les T}|\textbf{x}(t)|^{2}
\les c_1 \mathbb{E} \Big[N|x_0|^{2}+N\big(\int_0^{T}|f(s)|ds\big)^{2}+N\int_0^{T}\big(|\sigma_0(s)|^2
+|\sigma(s)|^{2}\big)ds+\sum_{i=1}^{N}\int_0^{T}|u_{i}(s)|^{2}ds\Big].$$

\section {The control problem with respect to volatility uncertainty}\label{sec3}
From now on, the time variable $t$ might be suppressed when no confusion occurs. Let $u_i\in {\cal U}_{c}^r,  i=1,\cdots, N$ be fixed.
The optimal control problem with volatility uncertainty can be formulated as
$$\hbox{\textbf{(P1)} maximize}_{\sigma_0\in {\cal U}_{c}^n}J_{\rm soc}^{(N)}(u,\sigma_0)$$which is equivalent to:

\textbf{(P1$^\prime$)} minimize $ \check{J}_{\rm soc}^{(N)}$ over $\sigma_0\in{\cal U}_c^n$, where
\begin{align*}
\check{J}_{\rm soc}^{(N)}(\sigma_0)=&\frac{1}{2}\sum_{i=1}^N
\mathbb{E}\int_0^T\Big\{-\big|x_i
-\Gamma x^{(N)}-\eta\big|^2_{Q}
+|\sigma_0|^2_{R_0}\Big\}dt-\frac{1}{2}\mathbb{E}|x_i(T)-\Gamma_0x^{(N)}(T)-\eta_0|_G^2.
\end{align*}Hereafter, the following notations will be used to enable more compact representation.
Let 
$\textbf{u}=(u_1^T,\cdots,u_N^T)^T,\quad \textbf{1}=(1,\cdots,1)^T, \quad  {\mathbf{\sigma}_i}=(0,\cdots,0,\sigma^T, 0,\cdots,0)^T, \quad
\textbf{A}=\diag(A, \cdots,A), \quad \textbf{B}=\diag(B, \cdots,B), \textbf{D}_i\!\!=\!\diag(0, \cdots,0,D,0,\cdots,0),\textbf{C}_0\!=\!\diag(C_0, \cdots,C_0)$, $\textbf{D}_0\!\!=\!\diag(D_0, \cdots,D_0)$ and $\textbf{x}_0=(x_0^T,\cdots,x_0^T)$.

\begin{remark}\label{rem1}
\! \emph{Hereafter, whenever necessary, we may exchange the usage of notation $u=(u_1,\! \cdots,$ $ u_{N}) \in \mathbb{R}^{r \times N}$ and $\textbf{u}=(u_1^T,\cdots,u_N^T)^{T} \in \mathbb{R}^{rN}$ by noting they both represent the team decision profile among all agents, but only differ in formations.}
\end{remark}
With above notations, we can rewrite dynamics of all agents in a more compact form:
\begin{align*}\label{eq3}
	d\textbf{x}(t) =&\textbf{Ax}(t)dt+\textbf{Bu}(t)dt+ \textbf{1}\otimes f(t)dt+\sum_{i=1}^N[\textbf{D}_i\textbf{u}(t)+{\bf{\sigma}}_i(t)] dW_i(t)
	\cr&+[\textbf{C}_0\textbf{x}(t)+\textbf{D}_0\textbf{u}(t)+\textbf{1}\otimes \sigma_0(t)]dW_0(t),\ \textbf{x}(0)=\textbf{x}_0.
\end{align*}Recall $\otimes$ denotes the Kronecker product. Also, we introduce the following notations:
\begin{equation}\label{eq2a}
\left\{
\begin{aligned}
&\Xi_1:=\Gamma^TQ+Q\Gamma-\Gamma^TQ\Gamma, \quad\quad \Xi_2:=Q\eta-\Gamma^T Q\eta, \\
&\Xi_1^G:=\Gamma_0^TG +G\Gamma_0-\Gamma_0^TG\Gamma_0,\quad  \Xi^{G}_2:=G\eta_0-\Gamma_0^TG\eta_0.
\end{aligned}
\right.
\end{equation}
By rearranging the integrand of $\check{J}_{\rm soc}^{(N)}$, we have
\begin{align}
\label{eq4}
\check{J}^{(N)}_{\rm soc}
=&\frac{1}{2}\mathbb{E}\int_0^{T}\Big(-|\textbf{x}|_{\hat{\textbf{Q}}}^2+2\hat{\bf{\eta}}^T\textbf{x}
+{N}|\sigma_0|_{R_0}^2\Big)dt-\frac{1}{2}\mathbb{E}\big(|\textbf{x}(T)|^2_{\hat{\textbf{G}}}-2\hat{\bf{\eta}}^T_0\textbf{x}(T)\big),
\end{align}
where $\hat{\bf{\eta}}=\textbf{1}\otimes \Xi_2$, $\hat{\bf{\eta}}_0=\textbf{1}\otimes \Xi_2^G$, and $\hat{\textbf{Q}}=(\hat{Q}_{ij})$,
$\hat{\textbf{G}}=(\hat{G}_{ij})$ are given respectively by
\begin{align}\label{eq3a}
\hat{Q}_{ii}=Q-\Xi_1/N,\  \hat{Q}_{ij}=-\Xi_1/N,\
\hat{G}_{ii}=G-\Xi_1^G/N,\  \hat{G}_{ij}=-\Xi_1^G/N,\ 1\les i\ne j\les N.
\end{align}
Denote $${\bf{\Gamma}}_i=\Big[-\frac{\Gamma}{N}, \cdots, -\frac{\Gamma}{N},I-\frac{\Gamma}{N},-\frac{\Gamma}{N}, \cdots, -\frac{\Gamma}{N} \Big],$$
where $I-\frac{\Gamma}{N}$ is the $i$th element.
Note $\hat{\textbf{Q}}=\sum_{i=1}^N{\bf{\Gamma}}_i^T{Q}{\bf{\Gamma}}_i$. Then
$$\lambda_{min}(Q)\sum_{i=1}^N{\bf{\Gamma}}_i^T{\bf{\Gamma}}_i\les \hat{\textbf{Q}}\les
\lambda_{max}(Q)\sum_{i=1}^N{\bf{\Gamma}}_i^T{\bf{\Gamma}}_i.$$
For further analysis, we assume

\ms

(\textbf{H2}) $\check{J}_{\rm soc}^{(N)}(\sigma_0)$ of (P1$^\prime$) is convex in $\sigma_0$;

\ms

(\textbf{H2}$^{\prime}$) $\check{J}_{\rm soc}^{(N)}(\sigma_0)$ of (P1$^\prime$) is uniformly convex in $\sigma_0$.

\ms

We have the following equivalent conditions that ensure (H2).
\begin{proposition}
	The following statements are equivalent:
	
	{\rm (i)} $\check{J}_{\rm soc}^{(N)}(\sigma_0)$ is convex in $\sigma_0$; 
	
	{\rm (ii)} For any $\sigma_0\in {\cal U}_c^n$,
	$$\mathbb{E}\int_0^{T}\Big(-\emph{\textbf{z}}^T\hat{\bf{Q}}\emph{\textbf{z}}
	+{N}\sigma_0^TR_0\sigma_0\Big)dt-\mathbb{E}|\emph{\textbf{z}}(T)|_{\hat{\emph{\textbf{{G}}}}}^2\ges 0,$$
	where $\emph{\textbf{z}}\in \mathbb{R}^{nN}$ satisfies
	$$\left\{\begin{aligned}
	d\emph{\textbf{z}} =& \emph{\textbf{Az}}dt+(\emph{\textbf{C}}_0\emph{\textbf{z}}+\emph{\textbf{1}}\otimes\sigma_0)dW_0,\cr
	\emph{\textbf{z}}(0)&= 0.
	\end{aligned}\right.
	$$
	
	{\rm (iii)}  $\bar{J}^{\prime}_i(\sigma_0)$ is convex in $\sigma_0$,
	where
	$$\begin{aligned}
		\bar{J}^{\prime}_i(\sigma_0)\stackrel{\Delta}{=}&\mathbb{E}\int_0^T\Big\{-\big|(I-\Gamma) z_i\big|^2_{Q}+|\sigma_0|^2_{R_0}\Big\}dt-\mathbb{E}|(I-\Gamma_0)z_i(T)|_G^2
	\end{aligned}$$
	subject to
\begin{equation}\label{eq4c}
  dz_i(t) =Az_i(t)dt+[C_0z_i(t)+\sigma_0(t)]dW_0(t),\ z_i(0)=0.
\end{equation}
	
\end{proposition}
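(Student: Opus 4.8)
The plan is to prove the three equivalences by a standard convexity-to-quadratic-form reduction, exploiting linearity of the state dynamics in the control variable $\sigma_0$. First I would observe that the map $\sigma_0 \mapsto \textbf{x}$ defined through the compact SDE is affine: write $\textbf{x}[\sigma_0] = \textbf{x}[0] + \textbf{z}[\sigma_0]$, where $\textbf{z}$ solves the homogeneous-initial-condition SDE driven only by the $\sigma_0$-dependent part, i.e. $d\textbf{z} = \textbf{Az}\,dt + (\textbf{C}_0\textbf{z} + \textbf{1}\otimes\sigma_0)\,dW_0$ with $\textbf{z}(0)=0$. Note that since $u$ is fixed and we are differentiating in $\sigma_0$ only, the drift, the $W_i$-diffusions, and all the $u$-terms contribute only to $\textbf{x}[0]$ and drop out of the variational (second-order) part. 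Substituting this decomposition into the quadratic functional \eqref{eq4}, the terms linear and constant in $\sigma_0$ cancel in the second difference, and the pure second-order term of $\check J^{(N)}_{\rm soc}$ along a perturbation $\sigma_0$ is exactly
$$\frac12\mathbb{E}\int_0^T\big(-\textbf{z}^T\hat{\textbf{Q}}\textbf{z} + N\sigma_0^T R_0\sigma_0\big)dt - \frac12\mathbb{E}|\textbf{z}(T)|_{\hat{\textbf{G}}}^2.$$
Since a quadratic functional on a vector space is convex if and only if its associated quadratic form is nonnegative, the equivalence of (i) and (ii) follows immediately (the factor $\frac12$ being irrelevant to the sign). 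This is the routine part.

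The equivalence of (ii) and (iii) is where the structure of $\hat{\textbf{Q}}$, $\hat{\textbf{G}}$ enters. I would use the identity already recorded in the excerpt, $\hat{\textbf{Q}} = \sum_{i=1}^N \bm{\Gamma}_i^T Q\,\bm{\Gamma}_i$, and the analogous $\hat{\textbf{G}} = \sum_{i=1}^N \bm{\Gamma}_i^T G\,\bm{\Gamma}_i$. The key algebraic observation is that $\bm{\Gamma}_i \textbf{z} = z_i - \frac1N\sum_{j=1}^N z_j = (I-\Gamma)z_i$ when all components satisfy the same scalar SDE \eqref{eq4c} with the same initial condition $z_i(0)=0$ and the same driving noise $W_0$ and the same $\sigma_0$ — in that case $z_1 = \cdots = z_N =: z_i$ (a single process), and one checks directly from the block structure of $\textbf{A}, \textbf{C}_0$ that the $\mathbb{R}^{nN}$-valued process $\textbf{z}=\textbf{1}\otimes z_i$ indeed solves the $\textbf{z}$-SDE of part (ii). Hence for such diagonal $\textbf{z}$,
$$\textbf{z}^T\hat{\textbf{Q}}\textbf{z} = \sum_{i=1}^N |(I-\Gamma)z_i|_Q^2 = N\,|(I-\Gamma)z_i|_Q^2,$$
and similarly for the terminal term, so the quadratic form in (ii), divided by $N$, becomes exactly $\bar J_i'(\sigma_0)$. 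This gives (ii)$\Rightarrow$(iii) at once. For (iii)$\Rightarrow$(ii) I would argue that, because $\sigma_0$ is common to all agents and all $z_i$ solve the same equation with the same initialization, the general $\textbf{z}$ in (ii) is automatically of this diagonal form — there is no loss of generality — so the two quadratic forms coincide up to the positive factor $N$ and nonnegativity of one is equivalent to nonnegativity of the other.

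The main obstacle, and the step I would write most carefully, is the passage from "convexity of the functional" to "nonnegativity of the second-order form" in a clean, self-contained way: one must verify that $\check J^{(N)}_{\rm soc}$ genuinely splits as (affine-linear part) $+$ (quadratic part) with the cross terms vanishing in the second difference $\check J^{(N)}_{\rm soc}(\sigma_0+\epsilon\delta) - 2\check J^{(N)}_{\rm soc}(\sigma_0) + \check J^{(N)}_{\rm soc}(\sigma_0-\epsilon\delta)$, which requires the explicit affine dependence $\textbf{x}[\sigma_0^1 + \sigma_0^2] = \textbf{x}[\sigma_0^1] + \textbf{x}[\sigma_0^2] - \textbf{x}[0]$ from linearity of the SDE. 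A secondary technical point is confirming the well-posedness and the $L^2$-estimates for $\textbf{z}$ and $z_i$ (so that all the expectations are finite and the manipulations are legitimate), but this is immediate from \textbf{(H1)} and the standard SDE estimate quoted at the end of Section~\ref{sec2}. Once the quadratic-form reduction is in place, the remaining identifications are pure linear algebra on the block matrices $\hat{\textbf{Q}}$, $\hat{\textbf{G}}$ via the $\bm{\Gamma}_i$ representation, and the proof closes.
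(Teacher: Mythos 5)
Your proposal is correct and follows essentially the same route as the paper: the paper cites \cite{HH16} for the quadratic-form reduction giving (i)$\Leftrightarrow$(ii) (which you spell out directly), and for (ii)$\Leftrightarrow$(iii) it likewise uses $z_1=\cdots=z_N$ to collapse the $\mathbb{R}^{nN}$ quadratic form into $N$ copies of $\bar J_i'$, exactly your $\bm{\Gamma}_i$-factorization argument in the guise of the computation \eqref{eq4b}. Only minor notational care is needed on your side: $\bm{\Gamma}_i\textbf{z}=z_i-\Gamma z^{(N)}$ (the factor $\Gamma$ is missing in your intermediate expression), and the terminal-weight factorization should use the $\Gamma_0$-analogue of $\bm{\Gamma}_i$, i.e.\ $\hat{\textbf{G}}=\sum_{i=1}^N\bm{\Gamma}_{0,i}^TG\,\bm{\Gamma}_{0,i}$ with $\Gamma_0$ in place of $\Gamma$.
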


\emph{Proof.} (i) $\Leftrightarrow$ (ii) is given in \cite{HH16}.
From (\ref{eq4c}), we have $z_1=z_2=\cdots=z_N=z^{(N)}$.
Thus,
\begin{align}\label{eq4b}
&\mathbb{E}\int_0^{T}\Big(-|\textbf{z}|^2_{\hat{\textbf{Q}}}
+{N}|\sigma_0|^2_{R_0}\Big)dt-\mathbb{E}|\textbf{z}(T)|^2_{\hat{\textbf{G}}}\cr
=&
\sum_{i=1}^N\mathbb{E}\int_0^T\Big(-\big| z_i-\Gamma z_i\big|^2_{Q}+|\sigma_0|^2_{R_0}\Big)dt-\sum_{i=1}^N\mathbb{E}|(I-\Gamma_0)z_i(T)|_G^2\cr
=&N\Big[\mathbb{E}\int_0^T\Big(-\big|(I-\Gamma)z_i\big|^2_{Q}+|\sigma_0|^2_{R_0}\Big)dt-\mathbb{E}|(I-\Gamma_0)z_i(T)|_G^2\Big],
\end{align}
which implies that (ii) is equivalent to (iii). $\hfill \Box$

Denote $\hat{\textbf{1}}=\textbf{1}\otimes I$. By \cite{sly}, if the following Riccati equation
\begin{equation}\label{eq5a}
\begin{split}
&\dot{\textbf{P}}+\textbf{A}^T\textbf{P}+\textbf{P}\textbf{A}+\textbf{C}^T_0\textbf{P}\textbf{C}_0-\hat{\textbf{Q}}-
\big(\hat{\textbf{1}}^T\textbf{P}\textbf{C}_0\big)^T\big[NR_0+\hat{\textbf{1}}^T\textbf{P}\hat{\textbf{1}}\big]^{-1}\hat{\textbf{1}}^T\textbf{P}\textbf{C}_0=0,\cr & \textbf{P}(T)=-\hat{\textbf{G}}
\end{split}
\end{equation}
admits a solution such that $NR_0+\hat{\textbf{1}}^T\textbf{P}\hat{\textbf{1}}>0$, then
$\check{J}_{\rm soc}^{(N)}(\sigma_0)$  is uniformly convex,
which further gives that (H2$^{\prime}$) holds.
The above condition (\ref{eq5a}) is of high-dimension $nN \times nN$ which is not feasible to verify. Alternatively, we give the following necessary and sufficient condition with low-dimensionality.
\begin{proposition}
	The following statements are equivalent:
	
	{\rm (i)} $\check{J}_{\rm soc}^{(N)}(\sigma_0)$  is uniformly convex in $\sigma_0$;
	
	{\rm (ii)} $\bar{J}^{\prime}_i(\sigma_0)$  is uniformly convex in $\sigma_0$;
	
	{\rm (iii)} The equation
	\begin{equation}\label{eq8e}
	\dot{K}+KA+A^TK+C_0^TKC_0-C_0^TK(K+R_0)^{-1}KC_0+\Xi_1-Q=0, \ K=\Xi_1^G-G.
	\end{equation}
	admits a solution such that
	$K+R_0>0$.
\end{proposition}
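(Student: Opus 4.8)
\emph{Proof strategy.} The plan is to establish the cycle of equivalences in two pieces: (i) $\Leftrightarrow$ (ii) directly from the scaling identity already used in the proof of the previous proposition, and (ii) $\Leftrightarrow$ (iii) through the completion-of-squares / Riccati correspondence for the single-block stochastic LQ problem \eqref{eq4c}--$\bar{J}^{\prime}_i$. For (i) $\Leftrightarrow$ (ii): both $\check{J}^{(N)}_{\rm soc}(\cdot)$ and $\bar{J}^{\prime}_i(\cdot)$ are, viewed as functionals of $\sigma_0$, a homogeneous quadratic form plus --- for $\check{J}^{(N)}_{\rm soc}$ --- an affine part, since the associated state processes $\textbf{z}$ and $z_i$ solve linear equations with zero initial data; thus each is uniformly convex exactly when its quadratic (Hessian) part dominates $\delta\,\mathbb{E}\int_0^T|\sigma_0|^2dt$ for some $\delta>0$. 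I would then quote the identity \eqref{eq4b}, $\mathbb{E}\int_0^T\big(-|\textbf{z}|^2_{\hat{\textbf{Q}}}+N|\sigma_0|^2_{R_0}\big)dt-\mathbb{E}|\textbf{z}(T)|^2_{\hat{\textbf{G}}}=N\,\bar{J}^{\prime}_i(\sigma_0)$, whose left-hand side is precisely the Hessian part of $\check{J}^{(N)}_{\rm soc}$. Hence $\check{J}^{(N)}_{\rm soc}$ is uniformly convex with modulus $\delta$ iff $\bar{J}^{\prime}_i$ is uniformly convex with modulus $\delta/N$, which gives the equivalence (and shows, incidentally, that (i) does not depend on $N$).

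For (iii) $\Rightarrow$ (ii) I would first rewrite $\bar{J}^{\prime}_i$ in ``closed'' form using $(I-\Gamma)^TQ(I-\Gamma)=Q-\Xi_1$ and $(I-\Gamma_0)^TG(I-\Gamma_0)=G-\Xi_1^G$, so that
\[
\bar{J}^{\prime}_i(\sigma_0)=\mathbb{E}\int_0^T\big(\langle(\Xi_1-Q)z_i,z_i\rangle+|\sigma_0|^2_{R_0}\big)dt+\mathbb{E}\langle(\Xi_1^G-G)z_i(T),z_i(T)\rangle,
\]
which exhibits $\bar{J}^{\prime}_i$ as the cost of a diffusion-controlled LQ problem for \eqref{eq4c} with control $\sigma_0$. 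Given a solution $K$ of \eqref{eq8e} on $[0,T]$ with $K+R_0>0$ --- hence $K+R_0\ges\delta_0 I$ for some $\delta_0>0$, by continuity of $K$ and compactness of $[0,T]$ --- I would apply It\^o's formula to $\langle Kz_i,z_i\rangle$ along \eqref{eq4c}, integrate over $[0,T]$, insert the terminal condition $K(T)=\Xi_1^G-G$, and use \eqref{eq8e} to complete the square in $\sigma_0$, obtaining
\[
\bar{J}^{\prime}_i(\sigma_0)=\mathbb{E}\int_0^T\big|\sigma_0+(K+R_0)^{-1}KC_0z_i\big|^2_{K+R_0}\,dt\ges\delta_0\,\mathbb{E}\int_0^T\big|\sigma_0+(K+R_0)^{-1}KC_0z_i\big|^2 dt.
\]
Writing $w:=\sigma_0+(K+R_0)^{-1}KC_0z_i$, the process $z_i$ then solves a linear SDE driven by $w$ with bounded coefficients and zero initial value, so $\mathbb{E}\int_0^T|z_i|^2dt\les c\,\mathbb{E}\int_0^T|w|^2dt$ by Gronwall's inequality, whence $\mathbb{E}\int_0^T|\sigma_0|^2dt\les c^{\prime}\,\mathbb{E}\int_0^T|w|^2dt$, and therefore $\bar{J}^{\prime}_i(\sigma_0)\ges(\delta_0/c^{\prime})\,\mathbb{E}\int_0^T|\sigma_0|^2dt$, i.e. (ii) holds.

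The implication (ii) $\Rightarrow$ (iii) is, I expect, the main obstacle. I would follow the standard stochastic-LQ route (the solvability theory underlying \cite{sly}): uniform convexity of $\bar{J}^{\prime}_i$ on $[0,T]$ transfers, with the same modulus, to the truncated functionals on every $[t,T]$ --- extend a control by zero on $[0,t]$, use $z_i(0)=0$ to see that the state and the cost are unchanged on $[t,T]$, and note that the second variation in the control does not depend on the frozen value $z_i(t)$ --- so each truncated LQ problem has a unique optimizer with a purely quadratic value $\langle K(t)\xi,\xi\rangle$; one then verifies that $K(\cdot)$ solves \eqref{eq8e} with $K(T)=\Xi_1^G-G$, exists on all of $[0,T]$, and satisfies $R_0+K>0$. (Equivalently, one may observe that \eqref{eq8e} is exactly the high-dimensional equation \eqref{eq5a} specialized to $N=1$, so this is parallel to the argument quoted there from \cite{sly}.) The real work sits in this last step: the term $C_0^TK(K+R_0)^{-1}KC_0$ makes \eqref{eq8e} only locally Lipschitz and a priori prone to finite-time blow-up, and it is precisely the uniform-convexity hypothesis that supplies the a priori bounds $R_0+K\ges\delta I$ and $\|K\|\les C$ on every subinterval of existence, which simultaneously rule out the blow-up and deliver the required positivity $R_0+K>0$; everything else is routine.
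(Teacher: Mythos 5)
Your proposal is correct and follows essentially the same route as the paper: (i)$\Leftrightarrow$(ii) via the scaling identity \eqref{eq4b} (the paper invokes \cite{LZ99} for the reduction of uniform convexity to the quadratic/Hessian part, which you spell out), and (ii)$\Leftrightarrow$(iii) via the standard stochastic-LQ Riccati correspondence, which is exactly what the paper delegates to \cite{sly}. Your explicit completion-of-squares argument for (iii)$\Rightarrow$(ii) and the sketch of (ii)$\Rightarrow$(iii) are faithful renditions of that cited theory, so there is no substantive divergence.
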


\emph{Proof.} (i) By (\ref{eq4b}) and \cite{LZ99}, we have (i)$\Leftrightarrow$(ii).
(ii)$\Leftrightarrow$(iii)
is implied from \cite{sly}. $\hfill \Box$

By examining the variation of $ \check{J}_{\rm soc}^{(N)}$, we obtain the following result.

\begin{theorem}\label{thm3a}
	Suppose that $R_0>0$, then for any fixed admissible strategy set $u\!=\!(u_1, \cdots, u_{N})$ $\in \prod_{i=1}^N {\cal U}_i^r$, Problem \emph{(P1$^\prime$)} has a minimizer $\sigma_0^*(u)$  if and only if
	\emph{(H2)} holds and the following forward-backward equation system admits a solution $(x_i,p_i, \{\beta_{i}^{j}\}_{j=0}^{N})$:
	\begin{equation}\label{eq4a}
	\left\{
	\begin{aligned}
	dx_i=&(Ax_i\!+\!Bu_i\!+\!f)dt\!+\!(Du_i\!+\!\sigma)dW_i\!+\!\Big(C_0x_i\!+\!D_0u_i\!-\!\frac{R_0^{-1}}{N}\sum_{j=1}^N\beta_j^0\Big)dW_0,\\
	dp_i=&-[A^Tp_i+C_0^T\beta_i^0-Qx_i+\Xi_1x^{(N)}+\Xi_2 ]dt+\beta_i^0dW_0+\sum_{j=1}^N\beta_i^jdW_j,\\
	x_i(0)&={x}_0,\quad  p_i(T)=(-G)x_i(T)+\Xi_1^Gx^{(N)}(T)+\Xi_2^G,\quad i=1,\cdots,N.
	\end{aligned}\right.
	\end{equation}In this case, the minimizer $\sigma_0^*(u)=-\frac{R_0^{-1}}{N}\sum_{j=1}^N\beta_j^0$.
\end{theorem}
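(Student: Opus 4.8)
The plan is to characterize the minimizer of Problem (P1$^\prime$) by a standard convex-optimization / stochastic-maximum-principle argument, exploiting that (P1$^\prime$) is a minimization of the functional $\check J_{\rm soc}^{(N)}(\sigma_0)$ over the Hilbert space ${\cal U}_c^n = L^2_{\mathbb F}(0,T;\mathbb R^n)$. First I would note that $\sigma_0 \mapsto \textbf{x}$ (equivalently, each $\sigma_0\mapsto x_i$) is affine: writing $x_i = \bar x_i + z_i$ where $\bar x_i$ solves \eqref{eq1} with $\sigma_0=0$ (and the given fixed $u$) and $z_i$ solves the linear homogeneous equation $dz_i = Az_i\,dt + (C_0 z_i + \sigma_0)\,dW_0$, $z_i(0)=0$, the functional $\check J_{\rm soc}^{(N)}$ becomes a quadratic functional of $\sigma_0$ whose leading (purely quadratic) part is exactly the expression in Proposition statement (ii)/(iii). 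Hence $\check J_{\rm soc}^{(N)}$ is convex in $\sigma_0$ precisely under (H2), and in that case a point $\sigma_0^*$ is a global minimizer if and only if the Gateaux derivative vanishes: $\langle D\check J_{\rm soc}^{(N)}(\sigma_0^*),\,\delta\sigma_0\rangle = 0$ for all $\delta\sigma_0 \in {\cal U}_c^n$. (If (H2) fails, convexity fails and the infimum is $-\infty$ along some direction, so no minimizer exists — this gives the "only if" for (H2).)

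Next I would compute that directional derivative explicitly. Let $y_i$ be the variation of $x_i$ in direction $\delta\sigma_0$, i.e. $dy_i = Ay_i\,dt + (C_0 y_i + \delta\sigma_0)\,dW_0$, $y_i(0)=0$. Then
\[
\langle D\check J_{\rm soc}^{(N)}(\sigma_0),\delta\sigma_0\rangle
= \mathbb E\int_0^T\Big\{-\big\langle Q x_i - \Gamma^T Q x^{(N)} ... \big\rangle\Big\}
\]
— more cleanly, using the notation \eqref{eq2a}, the derivative of the running cost pairs $y_i$ against $-Qx_i + \Xi_1 x^{(N)} + \Xi_2$ summed over $i$, plus the $N R_0\sigma_0$ term from $|\sigma_0|^2_{R_0}$, and at the terminal time pairs $y_i(T)$ against $-Gx_i(T)+\Xi_1^G x^{(N)}(T)+\Xi_2^G$. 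Introduce the adjoint processes $p_i$ solving the BSDE in \eqref{eq4a} with terminal condition $p_i(T) = -Gx_i(T)+\Xi_1^G x^{(N)}(T)+\Xi_2^G$ and martingale integrands $\beta_i^0,\dots,\beta_i^N$; then apply It\^o's formula to $\langle p_i, y_i\rangle$ on $[0,T]$. The drift terms are designed to cancel the running-cost pairing, and the $dW_0$ martingale part contributes $\mathbb E\int_0^T \langle \beta_i^0, \delta\sigma_0\rangle\,dt$ because $y_i$'s $dW_0$-diffusion is $C_0 y_i + \delta\sigma_0$ and the $C_0 y_i$ piece is absorbed by the $C_0^T\beta_i^0$ term in the $p_i$-drift. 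Summing over $i$, the stationarity condition becomes
\[
\mathbb E\int_0^T \Big\langle N R_0\,\sigma_0 + \sum_{j=1}^N \beta_j^0,\ \delta\sigma_0\Big\rangle\,dt = 0
\quad\text{for all }\delta\sigma_0\in{\cal U}_c^n,
\]
which forces $\sigma_0 = -\frac{R_0^{-1}}{N}\sum_{j=1}^N\beta_j^0$ (a.e., a.s.). Substituting this feedback back into the forward state equation for $x_i$ produces exactly the coupled FBSDE \eqref{eq4a}, so solvability of that system is equivalent to existence of a stationary point, and under (H2) a stationary point is a minimizer.

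For the converse direction (minimizer $\Rightarrow$ (H2) and FBSDE solvable): if a minimizer $\sigma_0^*$ exists, the second-order condition at $\sigma_0^*$ (nonnegativity of the Hessian, which here is $\sigma_0$-independent since the problem is quadratic) is precisely the inequality in Proposition (ii), i.e. (H2); and first-order optimality yields the FBSDE as above with $(x_i,p_i,\beta_i^j)$ the associated state/adjoint triple. I would organize the writeup as: (a) affine dependence and reduction to a quadratic functional; (b) convexity $\iff$ (H2), with the $-\infty$ argument when (H2) fails; (c) the It\^o-duality computation of $D\check J_{\rm soc}^{(N)}$ and identification of the stationarity condition; (d) reading off $\sigma_0^*$ and closing the loop to \eqref{eq4a}. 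The main obstacle — more bookkeeping than conceptual — is step (c): carefully tracking the mean-field coupling through $x^{(N)}$ and the Kronecker-product structure so that the cross terms in $\sum_i \langle \text{(cost gradient)}_i, y_j\rangle$ match the drift of $p_i$ exactly, and verifying the admissibility/integrability needed to drop the $dW_i$ ($i\ge1$) and $dW_0$ martingale expectations when applying It\^o to $\langle p_i,y_i\rangle$; this uses the a priori estimate stated after (H1) together with $R>0$, $R_0>0$. One should also remark that the solution of \eqref{eq4a}, if it exists, is unique under (H2$^\prime$) (strict convexity), though uniqueness is not claimed in the statement.
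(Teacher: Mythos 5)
Your proposal is correct and follows essentially the same route as the paper: a first-order variational (stationarity) argument with the adjoint BSDE and an It\^o-duality computation yielding $NR_0\sigma_0^*+\sum_{j=1}^N\beta_j^0=0$, hence $\sigma_0^*=-\frac{R_0^{-1}}{N}\sum_{j=1}^N\beta_j^0$ and the coupled system \eqref{eq4a}, with convexity (H2) of the quadratic functional making the stationary point a global minimizer (the paper phrases this sufficiency via completion of squares, which is the same quadratic-functional reasoning). Your added remarks on the second-order necessity of (H2) and the divergence to $-\infty$ when it fails are consistent with the paper's statement and fill in details the paper leaves implicit.
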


{\it Proof.} The ``if" part follows directly by the standard completion of square technique for (P1$^\prime$) and stationary condition reasoning for quadratic functional.

For ``only if" part, suppose $\sigma_0^*$ is a minimizer to Problem (P1$^\prime$). ${x}_i^*$ is the optimal state of agent $i$ under the volatility $\sigma_0^*$.
$x^{(N)}_*=\frac{1}{N}\sum_{j=1}^N x_j^*$. For $i=1,2,\cdots,N,$ denote $\delta x_i=x_i-{x}_i^*$ the increment of $x_{i}$ along with the variation $\delta \sigma_0=\sigma_0-\sigma_0^*.$ Similarly, $\delta x^{(N)}=\frac{1}{N}\sum_{j=1}^N\delta x_j$ and $\delta \check{J}_{\rm soc}^{(N)}(\sigma_0^*, \delta \sigma_0)= \check{J}_{\rm soc}^{(N)}(\sigma_0)- \check{J}_{\rm soc}^{(N)}(\sigma_0^*)+o(||\delta \sigma_{0}||_{L^{2}}),$ the Fr\'{e}chet differential of $\check{J}_{\rm soc}^{(N)}$ on $\sigma_0^*$ along with direction $\delta \sigma_0$.
By (\ref{eq1}),
\begin{equation}\label{eq4d}
d(\delta x_i)=A(\delta x_i)dt+[C_0(\delta x_i)+\delta \sigma_0]dW_0,\quad \delta x_i(0)=0, \quad i=1,2,\cdots,N.
\end{equation}
By standard variational principle, we have the following stationary condition on Fr\'{e}chet differential:
\begin{equation}\label{eq5}
\begin{aligned}
0=&\delta \check{J}_{\rm soc}^{(N)}(\sigma_0^*, \delta \sigma_0)\cr=&
\sum_{i=1}^N\mathbb{E}\int_0^T\Big\{\langle-Q\big[{x}^*_i-(\Gamma x^{(N)}_*+\eta)\big],\delta x_i-\Gamma\delta{x}^{(N)}\rangle +
\langle R_0\sigma_0^*,\delta \sigma_0\rangle \Big\}dt\cr
&+\sum_{i=1}^N\mathbb{E}\big\{\langle-G\big[{x}^*_i(T)-(\Gamma_0 x^{(N)}_*(T)+\eta_0)\big],\delta x_i(T)-\Gamma\delta{x}^{(N)}(T)\rangle\big\}.
\end{aligned}
\end{equation}
Introduce the adjoint equation
\begin{equation}\label{eq6}
\begin{split}
dp_i=\alpha_idt+\beta_i^0dW_0+\beta_i^idW_i+\sum_{j\not =i}\beta_i^jdW_j, \quad p_i(T)=(-G)x_i^*(T)+\Xi_1^Gx_*^{(N)}(T)+\Xi_2^G,
\end{split}
\end{equation}
where the processes $\{\alpha_i\}_{i=1}^{N},\{\beta_i^0\}_{i=1}^{N}$ and $\{\beta_i^j\}_{i \neq j}$ are to be determined. Then by It\^{o}'s formula,
\begin{align}\label{eq7}
&\mathbb{E}[\langle(-G)x_i(T)+\Xi_1^Gx^{(N)}(T)+\Xi_2^G,x_i(T)\rangle]
\cr=&\mathbb{E}\int_0^T\big[\langle \alpha_i,\delta x_i\rangle+\langle p_i,A\delta x_i\rangle+\langle \beta_i^0,\delta \sigma_0\rangle\big] dt.
\end{align}
It follows by (\ref{eq5})-(\ref{eq7}) that
\begin{align*}
0=&\mathbb{E}\sum_{i=1}^N\int_0^T\Big[\big\langle-Q\big( x^*_i-(\Gamma  x^{(N)}_*+\eta)\big),\delta x_i-\Gamma\delta{x}^{(N)}\big\rangle +
\langle R_0\sigma_0^*,\delta \sigma_0\rangle \Big]dt\cr
&+\sum_{i=1}^N \mathbb{E}\int_0^T\big[\langle \alpha_i,\delta x_i\rangle+\langle p_i,A\delta x_i\rangle+\langle \beta_i^0,C_0\delta x_i+\delta \sigma_0\rangle+\langle \beta_i^i,0\rangle\big] dt\cr
=&\mathbb{E}\int_0^T\Big\langle NR_0\sigma_0^*+\sum_{i=1}^N\beta_i^0,\delta \sigma_0\Big\rangle dt+\mathbb{E}\sum_{i=1}^N\int_0^T\Big\langle-Q\big[ x^*_i-(\Gamma x^{(N)}_*+\eta)\big]\cr
&+ {\Gamma^TQ} \big[(I-\Gamma)  x^{(N)}_*-\eta\big]+\alpha_i+A^Tp_i+C_0^T\beta_i^0, \delta x_i\Big\rangle dt,
\end{align*}
which leads to
$$\left\{\begin{aligned}
\alpha_i=&-\big[A^Tp_i+C_0^T\beta_i^0+ \Gamma^TQ \big[(I-\Gamma)  x^{(N)}_*-\eta\big]-Q\big[ x^*_i-(\Gamma  x^{(N)}_*+\eta)\big],\cr
\sigma_0^*=&-\frac{R_0^{-1}}{N}\sum_{i=1}^N\beta_i^0.
\end{aligned}
\right.$$
Thus, the Hamiltonian system (\ref{eq4a}) admits a solution $( x^*_i,{p}_i, \{\beta_{i}^{j}\}_{j=0}^{N})$. 
\hfill{$\Box$}



Let 
${p}^{(N)}=\frac{1}{N}\sum_{i=1}^N{p}_i$ and ${\beta}^{(N)}_0=\frac{1}{N}\sum_{i=1}^N{\beta}_i^0$.
It follows from (\ref{eq4a}) that state-average limits $\hat{x}=\lim_{N\longrightarrow +\infty}{x}^{(N)}$,  $\hat{p}=\lim_{N\longrightarrow +\infty}{p}^{(N)}$, and $\hat{\beta}_0=\lim_{N\longrightarrow +\infty}{\beta}^{(N)}_0$ satisfy:
\begin{equation}\label{eq8b}
\left\{
\begin{aligned}
d\hat{x}=&(A\hat{x}+B\hat{u}+f)dt+(C_0\hat{x}+D_0\hat{u}-R_0^{-1}\hat{\beta}_0)dW_0,\quad \\
d\hat{p}=&-\big[A^T \hat{p}+C_0^T\hat{\beta}_0-(Q-\Xi_1)\hat{x}+\Xi_2\big]dt+\hat{\beta}_0dW_0,\cr
\hat{x}(0)&=x,\quad \hat{p}(T)=(\Xi_1^G-G)\hat{x}(T)+\Xi_2^G.
\end{aligned}\right.
\end{equation}

%

%

\section {The control problem of agent $i$: person-by-person optimality}\label{sec4}
%
%

\subsection {Some variational analysis}\label{sec4.1}
When the 
volatility $\sigma_0^*=-\frac{R_0^{-1}}{N}\sum_{j=1}^N{\beta}_{j}^0$ is applied, we turn to study the outer minimization problem for team agents.

\textbf{(P2)}: minimize ${J}_{\rm soc}^{\rm wo}(u)$ over $\{u=(u_1,\cdots,u_N)|u_i\in {\mathcal U}_{c}^r\}$, where
\begin{align}\label{eq8d}
{J}_{\rm soc}^{\rm wo}(u)\triangleq &J^{(N)}_{\rm soc}(u, \sigma^{*}_{0}(u))\cr=&\frac{1}{2}\sum_{i=1}^N
\mathbb{E}\int_0^T\Big\{\big|x_i
-\Gamma x^{(N)}-\eta\big|^2_{Q}+|u_i|^2_R-|\sigma_0^*(u)|^2_{R_0}\Big\}dt\cr
&+
\frac{1}{2}\mathbb{E}|x_i(T)-\Gamma_0x^{(N)}(T)-\eta_0|_G^2,
\end{align}subject to\begin{equation}\label{eq9}
\left\{
\begin{aligned}
dx_i\!=&(Ax_i\!+\!Bu_i\!+\!f)dt\!+\!(Du_i\!+\!\sigma)dW_i
\!+\!\Big(C_0x_i\!+\!D_0u_i\!-\!\frac{R_0^{-1}}{N}\sum_{k=1}^N\beta_k^0\Big)dW_0,\\
dp_i\!=\!&-(A ^Tp_i+C_0^T\beta_i^0-Qx_i+\Xi_1x^{(N)}+\Xi_2)dt+\beta_i^0dW_0+\sum_{k=1}^N\beta_i^kdW_k,\\
x_i(0)&=x_0, \quad
p_i(T)=(-G)x_i(T)+\Xi_1^Gx^{(N)}(T)+\Xi_2^G.
\end{aligned}
\right.
\end{equation}
For further analysis, we introduce the following assumption.

\ms

\textbf{(H3)} ${J}_{\rm soc}^{\rm wo}(u)$ of (P2) is convex in $u$.

\ms

Suppose $\bar{u}=(\bar{u}_1,\cdots,\bar{u}_{i},\cdots,\bar{u}_N)$ and $\bar{x}=(\bar{x}_1,\cdots,\bar{x}_{i},\cdots,\bar{x}_N)$ are respectively the centralized optimal control and states of \textbf{(P2)} and we make the following person-by-person optimality variation around its optimal point. We now  perturb the control of $\mathcal{A}_{i}$ to be $u_i$ and keep $(\bar{u}_1,\cdots,\bar{u}_{i-1},\bar{u}_{i+1},\cdots,\bar{u}_N)$, the strategies of all other agents fixed. 
Let $\delta u_i=u_i-\bar{u}_i,$ and $\delta u_i\in {\cal U}_c^{r}$.
Denote $\delta x_j=x_j-\bar{x}_j$, $\delta p_j=p_j-\bar{p}_j$, and $\delta \beta_j^k=\beta_j^k-\bar{\beta}_j^k, j,k=1,\cdots,N$ the corresponding (forward, adjoint) state variation.
%
By (\ref{eq4a}) and
(\ref{eq9}), we have
\begin{equation}\label{eq11}
\left\{
\begin{aligned}
d(\delta x_i)\!=&(A\delta x_i\!+\!B\delta u_i)dt\!+\!(D\delta u_i)dW_i\!\cr&+\!\Big(C_0\delta x_i\!+\!D_0 \delta u_i\!-\!\frac{R_0^{-1}}{N} \sum_{k=1}^{N}\delta\beta_k^0\Big)dW_0,\ \delta x_i(0)=0,\\
d(\delta p_i)=&-\big(A^T \delta p_i+C_0^T\delta\beta_i^0-Q\delta x_i+\Xi_1\delta x^{(N)}\big)dt+\delta\beta_i^0dW_0\cr&+\delta\beta_i^idW_i+\sum_{k\neq i}\delta\beta_i^k dW_k,\ \delta p_i(T)=(-G)\delta x_i(T)+\Xi_1^G\delta x^{(N)}(T),\\
\end{aligned}
\right.
\end{equation}
and for $j\not= i$,
\begin{equation}\label{eq12}
\left\{
\begin{aligned}
d(\delta x_j)=&A\delta x_jdt+\Big(C_0\delta x_j-\frac{R_0^{-1}}{N} \sum_{l=1}^{N}\delta\beta_l^0\Big)dW_0,\ \delta x_j(0)=0,\\
d(\delta p_j)=& -\big(A^T \delta p_j+C_0^T\delta\beta_j^0-Q\delta x_j+\Xi_1\delta x^{(N)}\big)dt+\delta \beta_j^0dW_0+\delta\beta_j^jdW_j\cr&+\sum_{l\neq j}\delta\beta_j^l dW_l,\ \delta p_j(T)=(-G)\delta x_j(T)+\Xi_1^G\delta x^{(N)}(T).\\
\end{aligned}  \right.
\end{equation}
This implies that for any $j,j'\neq i$, $   \delta x_j=\delta x_{j'}$, which further gives
\begin{equation}\label{eq13}
\delta p_j=\delta p_{j'},\qquad \delta \beta_j^0=\delta \beta_{j'}^0, \qquad \hbox{for}\ \ j,j'\neq i.
\end{equation}
Let $\mathbb{E}_{{\mathcal F}^0}[\cdot]\stackrel{\Delta}{=}\mathbb{E}[\cdot|{\mathcal F}_t^0]$ (suppressing $t$).
Note that $W_j$ is independent of $W_0$. 
It follows from (\ref{eq11}) that
\begin{equation}\label{eq11b}
\left\{
\begin{aligned}
d(\mathbb{E}_{{\mathcal F}^0}[\delta x_i])=&\big(A\mathbb{E}_{{\mathcal F}^0}[\delta x_i]
+B\mathbb{E}_{{\mathcal F}^0}[\delta u_i]\big)dt\cr&+\Big(C_0\mathbb{E}_{{\mathcal F}^0}[\delta x_i]+D_0 \mathbb{E}_{{\mathcal F}^0}[\delta u_i]-\frac{R_0^{-1}}{N} \sum_{k=1}^{N}\mathbb{E}_{{\mathcal F}^0}[\delta\beta_k^0]\Big)dW_0,\\
d(\mathbb{E}_{{\mathcal F}^0}[\delta p_i])=&\!-\!\big(A^T \mathbb{E}_{{\mathcal F}^0}[\delta p_i]\!+\!C_0^T\mathbb{E}_{{\mathcal F}^0}[\delta\beta_i^0]\!-\!Q\mathbb{E}_{{\mathcal F}^0}[\delta x_i]\!+\!\Xi_1\mathbb{E}_{{\mathcal F}^0}[\delta x^{(N)}]\big)dt\\
&+\mathbb{E}_{{\mathcal F}^0}[\delta \beta_i^0]dW_0,\\
\mathbb{E}_{{\mathcal F}_0^0}[\delta x_i(0)]=&0,\quad
\mathbb{E}_{{\mathcal F}^0_T}[\delta p_i(T)]=(-G)\mathbb{E}_{{\mathcal F}^0_T}(\delta x_i(T))+\Xi_1^G\mathbb{E}_{{\mathcal F}^0_T}(\delta x^{(N)}(T)).
\end{aligned}  \right.
\end{equation}
It follows from (\ref{eq12}) that for $j\!\neq\!i$
\begin{equation}\label{eq12b}
\left\{
\begin{aligned}
d(\mathbb{E}_{{\mathcal F}^0}[\delta x_j])=&A\mathbb{E}_{{\mathcal F}^0}[\delta x_j]dt+\Big(C_0\mathbb{E}_{{\mathcal F}^0}[\delta x_j]-\frac{R_0^{-1}}{N} \sum_{k=1}^{N}\mathbb{E}_{{\mathcal F}^0}[\delta\beta_k^0]\Big)dW_0,\\
d(\mathbb{E}_{{\mathcal F}^0}[\delta p_j])=& -\big(A^T \mathbb{E}_{{\mathcal F}^0}[\delta p_j]+C_0^T\mathbb{E}_{{\mathcal F}^0}[\delta\beta_j^0]-Q\mathbb{E}_{{\mathcal F}^0}[\delta x_j]\\
&+\Xi_1\mathbb{E}_{{\mathcal F}^0}[\delta x^{(N)}]\big)dt+\mathbb{E}_{{\mathcal F}^0}[\delta \beta_j^0]dW_0,\\
\mathbb{E}_{{\mathcal F}_0^0}[\delta x_j(0)]\!=&0,
\mathbb{E}_{{\mathcal F}_T^0}[\delta p_j(T)]\!=(-G)\mathbb{E}_{{\mathcal F}_T^0}(\delta x_j(T))\!+\!\Xi_1^G\mathbb{E}_{{\mathcal F}_T^0}(\delta x^{(N)}(T)).
\end{aligned}  \right.
\end{equation}
Denote $\delta J^{\rm wo}_{\rm soc}(\bar{u}, \delta u_{i})$ the Fr\'{e}chet differential of $J^{\rm wo}_{\rm soc}$ at 
$\bar{u}$ along with direction
$\delta u_{i}$:
\begin{equation}\label{eq225}
J^{\rm wo}_{\rm soc}(\bar{u}\!+\!\delta u_{i})\!-\!J^{\rm wo}_{\rm soc}(\bar{u})\!=\!\delta J^{\rm wo}_{\rm soc}(\bar{u},\delta u_{i})\!+\!o(\|\delta u_{i}\|_{L^2})\!=\!\langle\mathcal{D}_{u_{i}}J^{\rm wo}_{\rm soc}(\bar{u}), \delta u_{i}\rangle\!+\!o(\|\delta u_{i}\|_{L^2})
\end{equation}where $\mathcal{D}_{u_{i}}J^{\rm wo}_{\rm soc}(\bar{u})$ is the Fr\'{e}chet derivative of $J^{\rm wo}_{\rm soc}$ at $\bar{u}$ with componentwise variation  $(0,\! \cdots\! \delta u_{i}^{T} \!\cdots\!, 0)$.
Then, from (\ref{eq13}), we can obtain
that for $j\not =i$,
\begin{equation*}
\begin{aligned}
&\delta {J}_{\rm soc}^{\rm wo}(\bar{u}, \delta u_i)\cr
= &\mathbb{E}\int_{0}^{T}\Big[\big<Q(\bar{x}_i\!-\!\Gamma\bar{x}^{(N)}\!-\!\eta),\delta x_i\big>\!+\!\big<Q\big(I\!-\!\frac{N-1}{N}\Gamma\big)\bar{x}^{(N)}\!-\!\frac{\bar{x}_i}{N}\!-\!\frac{N-1}{N}\eta),N \delta x_j\big>\\
&-\big<\frac{1}{N}\sum_{j\neq i}\Gamma^T Q(\bar{x}_j\!-\!\Gamma\bar{x}^{(N)}\!-\!\eta),\delta x_i\big>\!-\!\big<R_0^{-1}\bar{\beta}_0^{(N)},\delta \beta_i^0\big>\!-\!\big<R_0^{-1}\bar{\beta}_0^{(N)},(N-1)\delta \beta_j^0 \big>\\
&-\big<\Gamma^T Q\big((\bar{x}^{(N)}-\frac{\bar{x}_i}{N}-\frac{N-1}{N}\Gamma\bar{x}^{(N)}-\frac{N-1}{N}\eta\big),(N-1) \delta x_j\big>+\big<R\bar{u}_i,\delta u_i\big>\Big]dt\\
&+\mathbb{E}\big[\big<G(\bar{x}_i(T)-\Gamma_0\bar{x}^{(N)}(T)-\eta_0),\delta x_i(T)\big>-\big<\Gamma_0^TG(\bar{x}_i(T)-\Gamma_0\bar{x}^{(N)}(T)-\eta_0),\cr&\ \delta x^{(N)}(T)\big>+\sum_{j\neq i}\big<G(\bar{x}_j(T)-\Gamma_0\bar{x}^{(N)}(T)-\eta_0),\delta x_j(T)\big>\cr&-\sum_{j\neq i}\big<\Gamma_0^TG(\bar{x}_j(T)-\Gamma_0\bar{x}^{(N)}(T)-\eta_0),\delta x^{(N)}(T)\big>\big].
\end{aligned}
\end{equation*}When $N \to +\infty$, from (\ref{eq8b}), we further have
\begin{equation}\label{eq14b}
\begin{aligned}
&\lim_{N \to +\infty}
\delta {J}_{\rm soc}^{\rm wo}(\bar{u}, \delta u_i):=\delta \hat{J}_{i}(\bar{u}, \delta u_{i})=\langle \mathcal{D}_{u_{i}}\hat{J}_{i}(\bar{u}), \delta u_{i}\rangle
\cr=  &\mathbb{E}\int_{0}^{T}\Big[\big<Q\bar{x}_i,\delta x_i\big>-\big<Q(\Gamma\hat{x}+\eta)+\Gamma^T Q((I-\Gamma)\hat{x}-\eta),\delta x_i\big>+\big<R\bar{u}_i,\delta u_i\big>\\
&-\big<R_0^{-1}\hat{\beta}_0,\delta \beta_i^0\big>-\big<R_0^{-1}\hat{\beta}_0,\delta \beta ^{*}\big>\\
&+\big<Q((I-\Gamma)\hat{x}-\eta)-\Gamma^T Q((I-\Gamma)\hat{x}-\eta),\delta x^{*}\big>\Big]dt
\\
&+\mathbb{E}\big[\big<G\bar{x}_i(T),\delta x_i(T)\big>-\big<G(\Gamma_0\hat{x}(T)+\eta_0),\delta x_i(T)\big>\cr&+\big<G((I-\Gamma_0)\hat{x}(T)-\eta_0),\delta x^{*}(T)\big>
\!-\!\big<\Gamma_0^TG((I\!-\!\Gamma_0)\hat{x}(T)\!-\!\eta_0),\delta x_i(T)\big>\!\cr&-\!\big<\Gamma_0^TG((I\!-\!\Gamma_0)\hat{x}(T)\!-\!\eta_0),\delta x^{*}(T)\big>\big]
\end{aligned}
\end{equation}where $\delta \hat{J}_{i}(\bar{u}, \delta u_{i})$ is the Fr\'{e}chet differential of some auxiliary cost functional $\hat{J}_{i}$, to be constructed later (see (\textbf{P3}) in Section 5), $\mathcal{D}_{u_{i}}\hat{J}_{i}(\bar{u})$ the related Fr\'{e}chet derivative, and state average limits $(\hat{x}, \hat{\beta}_0)$ is to be determined by consistency condition in Section 5; moreover, for $j\not =i$, the quantities
\begin{equation*}\begin{aligned}
\delta x ^{*}:=N\mathbb{E}_{{\mathcal F}^0}[\delta x_j],\qquad \delta p^{*} :=N\mathbb{E}_{{\mathcal F}^0}[\delta p_j], \qquad  \delta \beta ^{*}:=N\mathbb{E}_{{\mathcal F}^0}[\delta \beta_j^0], 
\end{aligned}
\end{equation*}do not depend on $N$, and satisfy the following equations:
\begin{equation*}
\left\{
\begin{aligned}
d(\delta x^*)=&A(\delta x^*)dt+[C_0(\delta x^*)-R_0^{-1}(\delta \beta^*+\mathbb{E}_{{\mathcal F}^0}[\delta \beta_i^0])]dW_0,\ \delta x^*(0)=0,\\
d(\delta p^*)=&-\!\big[A^T(\delta p^*)\!+\!C_0^T(\delta \beta^*)\!-\!Q(\delta x^*)\!+\!\Xi_1(\mathbb{E}_{{\mathcal F}^0}[\delta x_i]\!+\!\delta x^*)\big]dt\cr&\!+\!(\delta \beta^*)dW_0,\ \delta p^*(T)=\Xi_1^G\mathbb{E}_{{\mathcal F}_T^0}(\delta x_i(T))-(G-\Xi_1^G)\delta x^*(T).\\
\end{aligned}
\right.
\end{equation*}
\begin{remark}
When studying the asymptotic behavior of \eqref{eq14b} with $N \longrightarrow +\infty,$ the following remainder term needs to be considered
$$\begin{aligned}
\epsilon_1^{(N)}:=&\mathbb{E}\int_{0}^{T}\Big[-\langle\Xi_1(\bar{x}^{(N)}-\hat{x}),\delta x_i\rangle-\langle R_0^{-1}(\bar{\beta}_0^{(N)}-\hat{\beta}_0),\delta \beta_i^0+(N-1)\delta \beta_j^0 \rangle\cr
& +\langle (Q-\Xi_1)(\bar{x}^{(N)}-\hat{x}),N\delta x_j\rangle \Big]dt -\mathbb{E}[\langle\Xi_1^G(\bar{x}^{(N)}(T)-\hat{x}(T)),\delta x_i(T)\rangle]\cr
&+\mathbb{E}[\langle (G-\Xi_1^G)(\bar{x}^{(N)}(T)-\hat{x}(T)),N\delta x_j(T)\rangle].
\end{aligned}$$
Because $\|\delta u_i\|_{L^{2}}<\infty$, $\epsilon_1^{(N)}$ should be an infinitesimal term with same order to $\|\bar{x}^{(N)}-\hat{x}\|_{\max}+\|\bar{\beta}^{(N)}_0-\hat{\beta}_0\|_{L_2}$ ($N\to \infty$). Actually,  from (\ref{eq4a}) and (\ref{eq8b}) we may obtain $\|\bar{x}^{(N)}-\hat{x}\|_{\max}^2+\|\bar{\beta}^{(N)}_0-\hat{\beta}_0\|_{L_2}^2=O(\frac{1}{{N}})$ (The rigorous proof will be given in Section \ref{sec8}). Thus, $\epsilon_1^{(N)}=O(\frac{1}{\sqrt{N}})\|\delta u_i\|_{L^{2}}\|\bar{u}\|_{L^{2}}.$ 
	
\end{remark}

\subsection{Duality derivation}A key point in analyzing the social optimization problem is to formulate some auxiliary control problem for a given agent, based on
$\delta \hat{J}_{i}=\lim_{N \to +\infty}
\delta {J}_{\rm soc}^{\rm wo}$ of \eqref{eq14b}, thus the decentralized strategy can be derived via some mean-field game procedure. Such auxiliary problem can be derived via some variational analysis (see \cite{WH17} for related variational analysis but with only drift-controlled term). Due to volatility uncertainty, all states of agents are coupled via some high-dimensional FBSDE system. Therefore, related variational analysis becomes fairly different to that of \cite{WH17}, and depends on a \emph{two-step duality} procedure, as discussed below.\\

\emph{Step 1.} (Duality independent of $(\delta x^*, \delta p^*)$). \\ The first step removes the dependence of $\delta \hat{J}_{i}(\bar{u}, \delta u_{i})$ on $(\delta x^*, \delta p^*),$ the variational process common to all agents. To this end, introduce the adjunct FBSDE: 
\begin{equation}\label{d1}
\left\{\begin{aligned}
dy & = f_{0} dt +zdW_0(t), \quad y(T)=\left(G-\Xi_1^G\right)\hat{x}(T)-\Xi_2^G,\\
dh & = f_1 dt + f_2 dW_0(t), \quad h(0)=0
\end{aligned}
\right.
\end{equation}where the drivers $(f_0; f_1, f_2)$ are to be determined.
Note $h(0)=0$, and
$$\delta p^*(T)-\Xi_1^G \mathbb{E}_{\mathcal{F}_T^0}(\delta x_i) (T) -(G-\Xi_1^G)\delta x^*(T)= 0.$$
By It\^{o}'s formula,
\begin{equation}\label{d2}
\begin{aligned}
0= & \mathbb{E}\int_0^T\Big\{\left\langle h, -\left(A^T \delta p^*+C_0^T(\delta\beta^*)+\Xi_1 \mathbb{E}_{{\mathcal F}^0}(\delta x_i)-(Q-\Xi_1)(\delta x^*)\right) \right. \\
&- \Xi_1^G \big(A \mathbb{E}_{{\mathcal F}^0}(\delta x_i)+B\mathbb{E}_{{\mathcal F}^0}(\delta u_i)-(G-\Xi_1^G)A(\delta x^*) \big) \big\rangle \cr& +\left\langle \delta p^*-\Xi_1^G \mathbb{E}_{{\mathcal F}^0}(\delta x_i)-(G-\Xi_1^G)\delta x^*(t) ,f_1 \right\rangle \\
&+\left\langle f_2, \delta \beta^*-\Xi_1^G \left(C_0 \mathbb{ E}_{{\mathcal F}^0}(\delta x_i) +D_0\mathbb{E}_{{\mathcal F}^0}(\delta u_i)\right)\right.\cr
&\left.-(G-\Xi_1^G)\left[C_0(\delta x^*)-R_0^{-1}\left(\delta \beta^*+ \mathbb{E}_{{\mathcal F}^0}(\delta \beta_i^0)\right)\right] \right\rangle \Big\}dt.\\
\end{aligned}
\end{equation}
Using It\^{o} formula to $\langle \delta x^*, y \rangle$, we have
\begin{equation*}
\begin{aligned}
&\mathbb{E} \left\langle (G-\Xi_1^G)\hat{x}(T)-\Xi_2^G,\delta x^*(T)\right\rangle\\
=\ &\mathbb{E} \big[\int_{0}^{T} \langle \delta x^*, f_0 \rangle +\langle A^T y, \delta x^* \rangle +\langle z, C_0 \delta x^* -R_0^{-1}(\delta \beta^* + \mathbb{E}_{{\mathcal F}^0}(\delta \beta_i^0)) \rangle\big] dt.\\
\end{aligned}
\end{equation*}
It follows from (\ref{d1}) and (\ref{d2}) that
\begin{equation}\label{d3}
\begin{aligned}
&\mathbb{E} \left\langle(G-\Xi_1^G)\hat{x}(T)-\Xi_2^G ,\delta x^*(T)\right\rangle\\
=\ &\mathbb{E} \int_{0}^{T} \Big[\big\langle \delta x^*, f_0+A^T y +C_0^T z+(Q-\Xi_1)^Th -A^T(G-\Xi_1^G)^Th\cr& -(G-\Xi_1^G)^T f_1 -C_0^T(G-\Xi_1^G) f_2 \big\rangle+\langle \delta p^*, -A h +f_1 \rangle\\
&+\langle \delta \beta^*, -R_0^{-1} z -C_0h +f_2 +R_0^{-1}(G-\Xi_1^G)f_2 \rangle \\
&+\langle \mathbb{E}_{{\mathcal F}^0}(\delta u_i),\!-B^T \Xi_1^G h\!-\!D_0^T\Xi_1^Gf_2 \rangle\!+\!\langle \mathbb{E}_{{\mathcal F}^0}(\delta \beta_i^0),-R_0^{-1}z\!+\!R_0^{-1}(G\!-\!\Xi_1^G)f_2 \rangle\\
&+ \langle \mathbb{E}_{{\mathcal F}^0}(\delta x_i),-\Xi_1^T h - A^T (\Xi_1^G)^T h -(\Xi_1^G)^T f_1 -C_0^T \Xi_1^G f_2 \rangle\Big]dt.
\end{aligned}
\end{equation}Set \begin{equation}\label{eq4444}\mathbb{I}^G :=\left[I+R_0^{-1}(G-\Xi_1^G)\right]^{-1}\end{equation}(note that $\mathbb{I}^G=I$ if $G=0$). Comparing the coefficients, we obtain
\begin{equation}\label{eq26}
\left\{
\begin{aligned}
f_1=&Ah, \quad f_2=\mathbb{I}^G(R_0^{-1}z +C_0 h +R_0^{-1}\hat{\beta}_0)\\
f_0=&-(A^T y +C_0^T z + (Q-\Xi_1)h - A^T(G-\Xi_1^G)h -(G-\Xi_1^G)f_1\cr
& -C_0^T (G-\Xi_1^G)f_2  +(Q-\Xi_1) \hat{x} -\Xi_2).\\
\end{aligned}\right.
\end{equation}
Then, we have
\begin{equation}\label{eq26aa}
\left\{
\begin{aligned}
dy= &-\Big[A^T y +C_0^T z + (Q-\Xi_1)h - A^T(G-\Xi_1^G)h -(G-\Xi_1^G)Ah \\
&-C_0^T(G\!-\!\Xi_1^G) \mathbb{I}^G (R_0^{-1}z\!+\!C_0 h\!+\!R_0^{-1} \hat{\beta}_0)\!+\!(Q\!-\!\Xi_1)\hat{x}\!-\!\Xi_2\Big]dt\!+\!z dW_0(t)\\
dh= &Ah dt + \mathbb{I}^G (R_0^{-1}z+C_0 h + R_0^{-1} \hat{\beta}_0)dW_0(t)\\
y(T)&= (G-\Xi_1^G)\hat{x}(T)-\Xi_2^G,\quad h(0)=0.
\end{aligned}\right.
\end{equation}
Let $\xi_1=(Q-\Xi_1)\hat{x} -\Xi_2$, and\ $\xi_2 =-R_0^{-1}\hat{\beta}_0$.
From (\ref{d3}), we obtain
\begin{equation*}
\begin{aligned}
&\mathbb{E}\left\langle (G-\Xi_1^G)\hat{x}(T)-\Xi_2^G, \delta x^*(T) \right\rangle
+\mathbb{E}\int_{0}^{T} [\langle\delta x^*, \xi_1 \rangle +\langle \delta \beta^*, \xi_2 \rangle] dt \\
=\ & \mathbb{E} \int_{0}^{T}  \Big[\left\langle \mathbb{E}_{{\mathcal F}^0}(\delta \beta_i^0), -R_0^{-1}z +R_0^{-1}(G-\Xi_1^G)f_2 \right\rangle+\left\langle  \mathbb{E}_{{\mathcal F}^0}(\delta u_i),-B^T \Xi_1^G h -D_0^T \Xi_1^G f_2 \right\rangle\\
& +\left\langle \mathbb{E}_{{\mathcal F}^0}(\delta x_i), -\Xi_1 h-A^T \Xi_1^G h -\Xi_1 f_1 -C_0^T \Xi_1^G f_2 \right\rangle\Big]dt.
\end{aligned}
\end{equation*}
 Then a direct computation from (\ref{eq14b}) shows that\begin{equation}\label{eq30}
\begin{aligned}
&\delta \hat{J}_{i}(\bar{u}, \delta u_{i}))=\lim_{N \rightarrow+\infty} \delta J_{\rm soc}^{\rm wo}(\bar{u}, \delta u_{i})\cr=\ &\mathbb{E} \int_{0}^{T} \big[\langle Q\bar{x}_i,\delta x_i \rangle\!+\!\langle R\bar{u_i},\delta u_i \rangle\!-\!\langle \Xi_1 \hat{x}\!+\!\Xi_2, \delta x_i \rangle \!-\!\langle R_0^{-1} \hat{\beta}_0, \delta \beta_i^0 \rangle \big]dt \\
&+\mathbb{E }[\langle G\hat{x}_i(T), \delta x_i(T) \rangle - \langle \Xi_1^G \hat{x}(T) +\Xi_2^G, \delta x_i(T) \rangle] \\
&+  \mathbb{E} \int_{0}^{T} \Big[\langle -R_0^{-1} z +R_0^{-1} (G-\Xi_1^G) \mathbb{I}^G (R_0^{-1} z +C_0 h +R_0^{-1}\hat{\beta}_0),\delta \beta_i^0  \rangle\\
&+ \langle \delta x_i, -\Xi_1 h\!-\!A^T \Xi_1^G h\!-\!\Xi_1^G A h \!-\! C_0^T \Xi_1^G f_2 \rangle\! -\!\langle \delta u_i, B^T \Xi_1^G h
+D_0^T \Xi_1^G f_2 \rangle\Big]dt.
\end{aligned}
\end{equation}
Let
$\xi_3=-R_0^{-1}(z+\hat{\beta}_0)+R_0^{-1}(G-\Xi_1^G)f_2$. Then we will consider the following term in step 2,
\begin{equation*}
\begin{aligned}
&\mathbb{E}\int_{0}^{T} \big[-\langle R_0^{-1} \hat{\beta}_0, \delta \beta_i^0 \rangle + \langle -R_0^{-1} z +R_0^{-1}(G-\Xi_1^G)f_2, \delta \beta_i^0  \rangle \big]dt =\mathbb{E} \int_{0}^{T} \langle \xi_3, \delta \beta_i^0 \rangle dt.
\end{aligned}
\end{equation*}

\emph{Step 2.} (Duality independent of $(\delta \beta_i^{0}$)). \\ The second step removes the dependence of $\delta \hat{J}_{i}(\bar{u}, \delta u_{i})$ on \emph{backward} variational process $\delta \beta_i^{0}$. Thus, the derived auxiliary problem will end up with a \emph{forward} LQ control on $(\delta u_i, \delta x_i$) only.
To this end, first introduce the adjoint process
\begin{equation*}
\begin{aligned}
&d \Phi =g_1 dt + g_2 dW_0(t),\ \Phi(0)=0,
\end{aligned}
\end{equation*} with $g_1, g_2$ to be determined.
Note\begin{equation*}\left\{
\begin{aligned}
&d \delta p_i =-\left(A^T \delta p_i +C_0^T \delta \beta_i^0 - Q \delta x_i + \Xi_1 \delta x^{(N)}\right)dt + \delta \beta_i^0 dW_0 + \sum_{k=1}^{N} \delta \beta_i^k dW_k(t)\\
& \delta p_i(T)=(-G)\delta x_i(T) +\Xi_{1}^{G} \delta x^{(N)}(T).
\end{aligned}\right.
\end{equation*}
Then, by It\^{o}'s formula, we obtain\begin{equation}\label{d5}
\begin{aligned}
0= \mathbb{ E} \int_{0}^{T} &\big[\langle \delta p_i, -A \Phi +g_1 \rangle +\langle \delta \beta _i^0, -C_0 \Phi +g_2 \rangle \\
&\!+\!\langle  \delta x_i, Q^T \Phi\! +\!A^T G \Phi\!+\!G g_1 \!+\!C_0^T G g_2 \rangle-\langle \delta u_i, B^T G \Phi + D_0^T G g_2 \rangle\big] dt\end{aligned}\end{equation}which implies
$g_1=A \Phi$, and $g_2=C_0 \Phi -\xi_2=C_0 \Phi+R_0^{-1}(z+\hat{\beta
})-R_0^{-1}(G-\Xi_1^G) f_2$.
Then we have\begin{equation*}
\begin{aligned}
&d \Phi = A \Phi dt +\big[ C_0 \Phi +R_0^{-1}(z+ \hat{\beta}_0) - R_0^{-1}(G-\Xi_1^G)f_2 \big]dW_0(t), \Phi(0)=0,
\end{aligned}
\end{equation*}
which with (\ref{eq26}) gives $\Phi=h.$
From (\ref{d5}),
\begin{equation*}
\mathbb{E}\! \int_{0}^{T}\! \langle  \delta \beta_i^0, \xi_3 \rangle dt\! =\!\mathbb{ E} \int_{0}^{T}\! \big[\langle \delta x_i, Q^T \Phi +A^T G \Phi +G g_1\!+C_0^T G g_2 \rangle +\langle \delta u_i, B^T G \Phi \!+\!D_0^TG g_2 \rangle\big] dt.
\end{equation*}
From this with (\ref{eq30}), the variational functional 
becomes
\begin{equation}\label{eq20d}
\begin{split}
\delta \hat{J}_{i}(\bar{u}, \delta u_{i})=&\mathbb{ E} \int_{0}^{T}\Big[\langle Q\bar{x}_i, \delta x_i \rangle \!+\!\langle  R \bar{u}_i, \delta u_i \rangle \\
& \!-\! \langle \Xi_1 \hat{x} \!+\!\Xi_2, \delta x_i \rangle\!+\!\left\langle Q^T h \!+\! A^T G h \!+\! G A h \!+\! C_0^T G g_2, \delta x_i  \right \rangle
\\
&\!-\! \langle\Xi_1 h \!+\! A^T \Xi_1^G h\!+\!\Xi_1^G A h+C_0^T \Xi_1^G f_2,\delta x_i \rangle \cr&+ \langle B^T G h \!+\!D_0^T G g_2, \delta u_i \rangle\!+\! \langle\! -\!B^T \Xi_1^G h \!-\!D_0^T G g_2, \delta u_i \rangle\Big] dt\\
&+ \mathbb{E}[\langle G \bar{x}_i(T), \delta x_i (T) \rangle -\langle \Xi_i^G \hat{x}(T)+\Xi_2^G,\delta x_i(T) \rangle]
\end{split}
\end{equation}where $\begin{aligned}
g_2 = f_2 = \mathbb{I}^G \left( R_0^{-1}(z+\hat{\beta}_0)+C_0 h \right).
\end{aligned}$

\section{Decentralized robust team strategy design }\label{sec5}

By (\ref{eq8b}) and limiting social variational functional (\ref{eq20d}),
we construct the following auxiliary control problem, for a representative agent, still indexed by $\mathcal{A}_{i}$.\\

\textbf{(P3)}: minimize $\hat{J}_i(u_i)$ over $u_i\in \mathcal{U}_i$, with state dynamics and cost functional:
\begin{equation}\label{eq21}
\begin{aligned}
dx_i\!\!=&(Ax_i\!\!+\!\!Bu_i\!+\!f)dt\!+\!(Du_i\!+\!\sigma) dW_i\!+\!(C_0x_i\!+\!\!D_0u_i\!-\!R_0^{-1}\hat{\beta}_0)dW_0,\ x_i(0)=x_0,\cr
\hat{J}_i(u_i)\!\!=\!&\frac{1}{2} \mathbb{E}\Bigg\{\int_{0}^{T}| x_i|^2_Q+| u_i|^2_{R}-2 \langle \Xi_1\hat{x}+\Xi_2, x_i\rangle+2 \langle (Q-\Xi_1)h,x_i \rangle \\
& + 2\langle A^T Gh +G A h+C_0^T G g_2-A^T \Xi_1^G h-\Xi_1^G Ah -C_0^T \Xi_1^G g_2, x_i \rangle \\
&+ 2\langle  B^T (G-\Xi_1^G) h +D_0^T (G-\Xi_1^G g_2), u_i\rangle dt\\
&+ \langle Gx_i(T),x_i(T) \rangle -2 \langle \Xi_1^G \hat{x}(T) +\Xi_2^G,x_i(T) \rangle  \Bigg\}.
\end{aligned}
\end{equation}Here the triple $(\hat{x},\hat{\beta}_0, h)$ satisfies the following limiting (off-line) system parameterized by undetermined process $\hat{u}:$\begin{equation}\label{eq22}
\left\{
\begin{aligned}
d\hat{x}=&(A\hat{x}+B\hat{u}+f)dt+(C_0\hat{x}+D_0\hat{u}-R_0^{-1}\hat{\beta}_0)dW_0,\qquad \\
d\hat{p}=&-\left(A^T \hat{p}+C_0^T\hat{\beta}_0-Q\hat{x}+\Xi_1\hat{x} + \Xi_2\right)dt+\hat{\beta}_0dW_0,\ \quad \\
dy=&-\left(A^T y\!+\!C_0^Tz\!+\!(Q\!-\!\Xi_1)h\!+\!(Q\!-\!\Xi_1)\hat{x}\!-\!\Xi_2\right)dt\!+\!(A^T (G\!-\!\Xi_1^G) h \\
& \left.+(G-\Xi_1^G)A h  +C_0^T(G-\Xi_1^G) f_2\right)dt +zdW_0,\qquad   \\
dh=&Ahdt+\left( I+ R_0^{-1}(G-\Xi_1^G) \right)^{-1}\left(C_0h+R_0^{-1}(z+\hat{\beta}_0)\right)dW_0,
\cr \hat{x}(0)\!=&x_0,\ \hat{p}(T)\!=\! \left(\Xi_1^G\! -\!G \right) \hat{x}(T) \!+\!\Xi_2^G,\ y(T)\!=\!(G\!-\!\Xi_1^G)\hat{x}(T)\!-\!\Xi_2^G,\ h(0)\!=\!0.
\end{aligned}\right.
\end{equation}
\begin{remark}\label{rem5.1}
 FBSDE \eqref{eq22} can be decomposed into subsystems $(\hat{x}, \hat{p}, \hat{\beta}_0)$ and $(h, y, z)$ which are decoupled for each other. Thus, solvability of \eqref{eq22} reduces to that of $(h, y, z)$ and $(\hat{x}, \hat{p}, \hat{\beta}_0)$ separately. Section $6$ will discuss the global solvability of subsystem $(h, y, z)$, and similar analysis can be applied to $(\hat{x}, \hat{p}, \hat{\beta}_0)$ considering these two subsystems have similar coupling structures. Moreover, parameter process $\hat{u}$ will be further determined by some consistency condition system through mean-field game argument. \end{remark}
Let $\hat{u}(t)\in {\mathcal F}_t^{0}$ be fixed. We study the decentralized open-loop strategy and related consistency condition system. We have the following result by maximum principle.
\begin{theorem}\label{thm4}
	Suppose that 
	$Q\ges0, G\ges0$ and $R>0$.
	Then the following backward stochastic differential equation (BSDE) admits a (unique) solution
	
	\begin{equation}\label{eq23}
	\begin{aligned}
	dk_i\!=\!&-\big[A^T k_i\!+\!C_0^T\zeta_0\!+\!Qx_i\!-\!\Xi_1\hat{x}\!+\!(Q\!-\!\Xi_1)h\!-\!\Xi_2\!+\!\mathcal{K}(G,g_2)\!-\!\mathcal{K}(\Xi_1^G,g_2) \big]dt\\
	&+\zeta_0dW_0+\zeta_idW_i,\qquad k_i(T)=Gx_i(T)-\Xi_1^G \hat{x}(T)-\Xi_2^G,
	\end{aligned}
	\end{equation}where $\mathcal{K}(G,g_2)=A^T Gh+GAh+C_0^TGg_2$, $\mathcal{K}(\Xi_1^G,g_2)=A^T\Xi_1^G h+\Xi_1^G Ah +C_0^T \Xi_1^G g_2$
	and\begin{equation}\label{eq24}
	\check{u}_i\!=\!-R^{-1}(B^Tk_i\!+\!D_0^T\zeta_0\!+\!D^T\zeta_i\! +\! B^T (G\!-\!\Xi_1^G)h\!+\!D_0^T (G\!-\!\Xi_1^G)g_2), \quad i\!=\!1,\cdots,N.
	\end{equation}
\end{theorem}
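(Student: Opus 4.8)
The plan is to recognize \textbf{(P3)} as a standard, uniformly convex linear-quadratic stochastic control problem driven by $(W_i,W_0)$ (the remaining noises $W_j$, $j\neq 0,i$, enter only as martingale directions in the adjoint equation), whose coefficient processes $\hat x,\hat\beta_0,h,z,f_2,g_2$ are \emph{fixed} $\mathcal{F}^0$-adapted $L^2$-processes supplied by the off-line system \eqref{eq22} (its global solvability is the subject of Section~\ref{sec5}--\ref{sec8}; here only $\hat x,h\in L^2_{\mathbb{F}^0}(\Omega;C([0,T];\mathbb{R}^n))$ and $z,\hat\beta_0\in L^2_{\mathbb{F}^0}(0,T;\mathbb{R}^n)$ are needed, which in turn makes $f_2=g_2=\mathbb{I}^G(R_0^{-1}(z+\hat\beta_0)+C_0h)$ an $L^2$-process). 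First I would settle well-posedness of \textbf{(P3)}: writing $x_i=x_i^{0}+\widetilde x_i$ with $x_i^{0}$ the part independent of $u_i$ and $\widetilde x_i$ linear in $u_i$ with zero initial value, one gets $\hat J_i(u_i)=\tfrac12\mathbb{E}\int_0^T|u_i|_R^2\,dt+\big(\text{terms with nonnegative quadratic part in }u_i\big)+\big(\text{affine in }u_i\big)$, since $Q\ges0$, $G\ges0$ contribute $\tfrac12\mathbb{E}\int_0^T|\widetilde x_i|_Q^2\,dt+\tfrac12\mathbb{E}|\widetilde x_i(T)|_G^2\ges0$. Hence $\hat J_i$ is a continuous, coercive (by $R>0$), uniformly convex quadratic functional on the Hilbert space $\mathcal{U}_i$, so it has a unique minimizer $\check u_i$, with associated optimal state $\check x_i$.

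Second, I would derive the optimality system by the stochastic maximum principle, using convex perturbations $\check u_i+\epsilon\,\delta u_i$ rather than spike variations; this is legitimate precisely because the problem is convex, so no second-order adjoint process is required despite the system being diffusion-controlled. The variation $\delta x_i$ satisfies the affine SDE $d(\delta x_i)=(A\delta x_i+B\delta u_i)\,dt+D\delta u_i\,dW_i+(C_0\delta x_i+D_0\delta u_i)\,dW_0$, $\delta x_i(0)=0$. Introducing the linear adjoint BSDE \eqref{eq23} and applying It\^o's formula to $\langle k_i,\delta x_i\rangle$, the $A^T$- and $C_0^T$-terms in the generator cancel against $\langle k_i,A\delta x_i\rangle$ and $\langle\zeta_0,C_0\delta x_i\rangle$, and the terminal value of $k_i$ absorbs the terminal part of $\mathcal{D}_{u_i}\hat J_i$; this yields the duality identity
\[
\langle\mathcal{D}_{u_i}\hat J_i(\check u_i),\delta u_i\rangle=\mathbb{E}\!\int_0^T\!\big\langle R\check u_i+B^Tk_i+D^T\zeta_i+D_0^T\zeta_0+B^T(G-\Xi_1^G)h+D_0^T(G-\Xi_1^G)g_2,\ \delta u_i\big\rangle dt .
\]
At this point one checks that, with $\check x_i$ frozen, \eqref{eq23} is a genuine linear BSDE with $L^2$ terminal datum (built from $\check x_i(T),\hat x(T)$) and $L^2$ generator (built from $\check x_i,\hat x,h,g_2$), so standard linear-BSDE theory gives a unique adapted solution $(k_i,\zeta_0,\zeta_i,\{\zeta_i^{j}\}_{j\neq 0,i})$.

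Third, I would conclude from convexity: $\check u_i$ is optimal if and only if $\mathcal{D}_{u_i}\hat J_i(\check u_i)=0$, i.e.\ (all processes in the bracket being adapted to the admissible filtration) if and only if $\check u_i$ is given by \eqref{eq24}; reading the computation backwards shows that the uniquely existing optimal pair together with this adjoint furnishes a solution of the coupled system \eqref{eq21}--\eqref{eq23}.

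The main obstacle is more a matter of precise formulation than of depth: as written, \eqref{eq23} is coupled to the state through $x_i(T)$ and $Qx_i$, so the assertion that it "admits a unique solution" is really the assertion that the optimality forward-backward system \eqref{eq21}--\eqref{eq23} is solvable, which one obtains for free from the uniquely existing $(\check u_i,\check x_i)$ and the resulting linear BSDE; I would state this explicitly. A secondary care point is bookkeeping: one must keep the groupings $\mathcal{K}(G,g_2)$, $\mathcal{K}(\Xi_1^G,g_2)$ and the $\mathbb{I}^G$-factor inside $g_2=f_2$ straight so that the cancellation in the It\^o computation reproduces exactly the generator and feedback recorded in \eqref{eq23}--\eqref{eq24}, matched against the limiting variational functional \eqref{eq20d} — routine but error-prone.
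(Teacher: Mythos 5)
Your proposal is correct and follows essentially the same route as the paper: uniform convexity of \textbf{(P3)} from $Q\ges0$, $G\ges0$, $R>0$ gives a unique minimizer, the adjoint equation \re{eq23} is a standard linear BSDE once the optimal state is fixed, and It\^o duality turns the stationarity condition $\delta\hat J_i(\check u_i,\delta u_i)=0$ into the feedback formula \re{eq24}. Your additional remarks --- the explicit decomposition justifying uniform convexity and the observation that solvability of \re{eq23} is really solvability of the coupled optimality system inherited from the unique optimal pair --- are refinements of bookkeeping the paper leaves implicit, not a different argument.
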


\emph{Proof.}  Since $Q\ges0, G\ges0$ and $R>0$, (P3) is uniformly convex, which implies the unique solvability of (P3).
Assume that $\check{u}_i$ is the unique optimal control of Problem (P3) and $\check{x}_i$ is the state equation under $\check{u}_i$. Then
\begin{equation}\label{eq25}
\begin{aligned}
0=&\delta {\hat{J}}_i(\check{u}_i, \delta u_{i})\cr=&\mathbb{E}\int_{0}^{T}\big[\big<Q\check{x}_i,\delta x_i\big>+\big<R\check{u}_i,\delta u_i\big>-\big<\Xi_1\hat{x}+\Xi_2,\delta x_i\big>+\big<(Q-\Xi_1)h,\delta x_i\big>\big]\cr
& + \big<\mathcal{K}(G,g_2)- \mathcal{K}(\Xi_1^G,g_2),\delta x_i\big> +\big< B^T(G-\Xi_1^G)h+D_0^T(G-\Xi_1^G)g_2,\delta u_i \big>dt\\
&+\big< G \check{x}_i(T),\delta x_i(T)\big>-\big< \Xi_1^G\hat{x}(T)+\Xi_2^G,\delta x_i(T)\big>.
\end{aligned}
\end{equation}Given $\hat{x}$ and $h$, (\ref{eq23}) is a standard linear BSDE and thus has a unique solution $(k_i,\zeta_0,\zeta_i)$. Then
\begin{equation}\label{eq37}
\begin{aligned}
&\langle G \check{x}_i(T)-\Xi_1^G \hat{x}(T)-\Xi_2^G,\delta x_i(T) \rangle\cr
=&\mathbb{E}\int_{0}^{T}\Big\{\langle -(Qx_i-\Xi_1\hat{x}-(Q-\Xi_1)h-\Xi_2-\mathcal{K}(G,g_2)+\mathcal{K}(\Xi_1^G,g_2)),\delta x_i\rangle\\
&+\langle k_i,B\delta u_i\rangle+\langle \zeta_0,D_0\delta u_i\rangle
+\langle \zeta_i,D\delta u_i\rangle\Big\}dt.
\end{aligned}
\end{equation}
From this and (\ref{eq25}), we have
\begin{equation}\label{eq25b}
0=\mathbb{E}\int_{0}^{T}\langle R\check{u}_i+ B^T(G-\Xi_1^G)h+D_0^T(G-\Xi_1^G)g_2+B^T k_i+ D_0^T\zeta_0+ D^T\zeta_i,\delta u_i\rangle dt,
\end{equation}
which implies the open-loop optimal strategy:
$$\begin{aligned}
\check{u}_i=&-R^{-1}\left(B^Tk_i+D_0^T\zeta_0+D^T\zeta_i+B^T (G-\Xi_1^G)h+D_0^T (G-\Xi_1^G)g_2\right)\cr
=&-R^{-1}(v_{i}+B^T (G-\Xi_1^G)h+D_0^T (G-\Xi_1^G)g_2).
\end{aligned}$$Note that here,\begin{equation}\label{DD}v_i:=B^T k_i+D^T \zeta_i+D_0 \zeta_0.\end{equation} \hfill $\Box$

After the strategy  (\ref{eq24}) is applied, we obtain the following state equation:
\begin{equation*}
\begin{aligned}
dx_i= &\left[ Ax_i -BR^{-1} \left( v_i +B^T(G-\Xi_1^G)h+D_0^T(G-\Xi_1^G)g_2\right)+ f\right]dt\\
&+\left[- DR^{-1} \left( v_i +B^T(G-\Xi_1^G)h+D_0^T(G-\Xi_1^G)g_2\right)+ \sigma\right]dW_i\\
&+\left[C_0x_i-DR^{-1} \left( v_i +B^T(G-\Xi_1^G)h+D_0^T(G-\Xi_1^G)g_2\right)-R_0^{-1}\hat{\beta}_0\right]dW_0.
\end{aligned}
\end{equation*}Consequently, consistency argument implies the following consistency condition (CC) system to $(\hat{x},\hat{\beta}_0, h)$:
\begin{equation}\label{eq43}
\left\{ \begin{aligned}
dx_i\!=\!&\left[Ax_i-BR^{-1}(v_i+B^T(G-\Xi_1^G)h+D_0^T(G-\Xi_1^G)g_2)+f\right]dt\\
&\!+\!\left[ C_0x_i-D_0R^{-1}(v_i+B^T(G-\Xi_1^G)h+D_0^T(G-\Xi_1^G)g_2)-R_0^{-1}\hat{\beta}_0 \right]dW_0\\
&\!-\!\left[ D R^{-1}(v_i+B^T(G-\Xi_1^G)h+D_0^T(G-\Xi_1^G)g_2)-\sigma \right]dW_i,\quad \\
dk_i\!=\!&-\!\big[A^T k_i\!+\!C^T_0\zeta_0\!+\!Qx_i\!-\!\Xi_1 \mathbb{E}_{{\mathcal F}^0}[x_i] \!-\!\Xi_2\!+\!(Q\!-\!\Xi_1)h\!\\
&+\!\mathcal{K}(G,g_2)\!-\!\mathcal{K}(\Xi_1^G,g_2)\big]dt\!+\!\zeta_0dW_0+\zeta_idW_i, \quad\\
d\hat{p}\!=\!&-\big[A^T \hat{p}+C_0^T\hat{\beta}_0-(Q-\Xi_1)\mathbb{E}_{{\mathcal F}^0}[x_i]+\Xi_2\big]dt+\hat{\beta}_0dW_0, \ \\
dy\!=\!&-\big[A^T y+C_0^Tz+(Q-\Xi_1)h+(Q-\Xi_1)\mathbb{E}_{\mathcal{F}_T^0}[x_i]-\Xi_2\big]dt\\
&+[A^T(G-\Xi_1^G)h\!+\!(G-\Xi_1^G)Ah+C_0^T(G-\Xi_1^G)g_2]dt+zdW_0,\ \\
dh\!=\! &Ahdt+[I+R_0^{-1}(G-\Xi_1^G)]^{-1}[C_0h+R_0^{-1}(z+\hat{\beta}_0)]dW_0,\\
x_i(0)\!=&x_0, k_i(T)\!=\!Gx_i(T)\!-\!\Xi_1^G \mathbb{E}_{\mathcal{F}_T^0}[x_i(T)]\!-\!\Xi_2^G,\hat{p}(T)\!=\!(\Xi_1^G\!-\!G)\mathbb{E}_{\mathcal{F}_T^0}[x_i(T)]\!+\!\Xi_2^G,\cr y(T)=&(G-\Xi_1^G)\mathbb{E}_{\mathcal{F}_T^0}[x_i(T)]-\Xi_2^G,\ h(0)=0.
\end{aligned} \right.
\end{equation}\begin{remark}CC system \eqref{eq43} differs from those in classical mean-field game literature (e.g., \cite{HCM07, LZ08, WZ13, MB17}) by noting the evolution dynamics of $\hat{x}$ is not explicitly specified here. Instead, it is characterized by some implicit representation $\hat{x}=\mathbb{E}_{{\mathcal F}^0}[x_i]$ which is embedded into an augmented mean-field type FBSDE system of $(x_i, k_i, \hat{p}, y, h)$ driven by a generic Brownian motion $W_i$ independent of common noise $W_0.$ CC system \eqref{eq43} is symmetric for all agents thus such representation is uniquely defined.
	
	Such difference in CC representation is mainly caused by the presence of adjoint process term $D^{T} \zeta_i$ in the decentralized strategy design (refer to (\refeq{DD})). Thus, an explicit representation of $\hat{x}$ becomes unavailable, and a similar CC representation was derived in \cite{HHN17}. \end{remark}

\section{Well-posedness of relevant FBSDEs}
Our study in previous sections, especially the one related to decentralized strategy design and consistency condition systems, involves various (fully-coupled) FBSDEs or Riccati equations. Keeping this in mind, this section aims to discuss the existence and uniqueness of their (global) solvability. Note that because of the introduction of soft constraints, these equations are intrinsically \emph{non-standard} (i.e., control/state weights are indefinite) thus their global solvability becomes more technical. Moreover, due to the uncertainty on volatility, it is necessary to treat the adjoint states of FBSDE which closely connect to volatility, the diffusion term in BSDE formulation. As a sequel, the relevant analysis becomes more complex considering the adjoint states are of less regularity property.

We consider the solvability of FBSDE (\ref{eq22}) from Section \ref{sec5}. Similar analysis can be applied to CC system \eqref{eq43} for which the arguments become more lengthy. By partial coupling of Remark \ref{rem5.1}, it suffices to consider the following sub-system constructed by $(h, y,z)$:
\begin{equation}\label{eq42}
\left\{
\begin{aligned}
dh = &Ahdt + \mathbb{I}^G \left(C_0h+R_0^{-1}(z+\hat{\beta}_0)\right)dW_0(t), \quad \\
dy = &-\left(A^Ty+C_0^Tz+(Q-\Xi_1)h+(Q-\Xi_1)\hat{x}-\Xi_2\right)dt\\
&+\left(A^T(G-\Xi_1^G)h+(G-\Xi_1^G)Ah+C_0^T(G-\Xi_1^G)f_2\right)dt+zdW_0(t),\\
h(0)&=0,\quad y(T) =\  (G-\Xi_1^G)\hat{x}(T)-\Xi_2^G.
\end{aligned}\right.
\end{equation}Equation (\ref{eq42}) is a fully coupled FBSDE involving forward state $h$, backward state $y$ and adjoint state $z$. Moreover, it is \emph{nonstandard} or \emph{indefinite} because of the volatility uncertainty (thus, unlike \emph{definite} case, some weights are singular or negative due to its minmax feature). It is known that (global) solvability of such indefinite FBSDE is by no means unconditional: to ensure its well-posedness, it is always necessary to impose some additional compatibility conditions. Also, direction computation indicates the \emph{monotonicity method}, which is well applied to nonstandard FBSDE, fails to work here.

{\textbf{Reduction decoupling method.}} Our method is the reduction decoupling method proposed in \cite{Y06}, which leads to the global solvability by imposing conditions on orthogonality of $C_0$. 
Let $\Psi_1(\cdot,s)$ be the solution of the following ODE:
$$\left\{
\begin{aligned}
\frac{d}{dt}\Psi_1(t,s)&=\begin{pmatrix} A  &0 \\ \widehat{A}&-A^T \end{pmatrix}\Psi_1(t,s), \quad t\in [s,T],\\
\Psi_1(s,s)&=I,
\end{aligned}
\right.$$
where
$$\widehat{A}\stackrel{\Delta}{=}(\Xi_1-Q)+A^T(G-\Xi_1^G)+(G-\Xi_1^G)A+C_0^T(G-\Xi_1^G)
\mathbb{I}^GC_0.$$
Denote $\Psi_1(t)=\Psi_1(t,0)$.
Then we have
\begin{equation*}
\begin{aligned}
\Psi_1(t) = \exp \begin{pmatrix} At  &0 \\ \widehat{A}t&-A^Tt \end{pmatrix}
= \begin{pmatrix} \exp(At)&0 \\  \sum_{n=0}^{\infty}\frac{\Lambda_n t^n}{n!} &\exp(-A^Tt) \end{pmatrix},
\end{aligned}
\end{equation*}
where
$$\Lambda_n\stackrel{\Delta}{=}\widehat{A}A^{n-1}-A^T\widehat{A}A^{n-2}+\cdots+(-A^T)^{k-1}
\widehat{A}A^{n-k}+\cdots+(-A^T)^{n-1}\widehat{A}.$$
If $A=A^T$ and $A\widehat{A}=\widehat{A}A$, then
$$\Lambda_n=\left\{
\begin{array}{cl}
\widehat{A}A^{n-1},& n=2k-1,\\
0,&n=2k,
\end{array}
\right.$$
and
$$\sum_{n=1}^{\infty}\frac{\Lambda_n t^n}{n!} =\widehat{A}\sum_{k=1}^{\infty}\frac{A^{2k-2} t^{2k-1}}{(2k-1)!}. $$
Further, if $A$ is invertible, then
$$\sum_{n=1}^{\infty}\frac{\Lambda_n t^n}{n!} =\widehat{A}A^{-1}\frac{e^{At}-e^{-At}}{2}.$$
We have
\begin{equation*}
\begin{aligned}
& (0,I) \begin{pmatrix} \exp(AT)&0 \\ \sum_{n=1}^{\infty}\frac{\Lambda_n T^n}{n!}&\exp(-A^TT) \end{pmatrix}
\begin{pmatrix} \mathbb{I}^GC_0&0 \\ 0&0 \end{pmatrix} \\
=& 	\begin{pmatrix} \sum_{n=1}^{\infty}\frac{\Lambda_n T^n}{n!}&\exp(-A^TT) \end{pmatrix}
\begin{pmatrix} \mathbb{I}^GC_0&0 \\ 0&0 \end{pmatrix}
=	\begin{pmatrix} \sum_{n=1}^{\infty}\frac{\Lambda_n T^n}{n!}\mathbb{I}^GC_0&0 \end{pmatrix}.
\end{aligned}
\end{equation*}
Thus, if and only if $ \sum_{n=1}^{\infty}\frac{\Lambda_n T^n}{n!}\mathbb{I}^GC_0=0$, then
\begin{equation}\label{eq44a}
(0,I) \Psi_1(T) \begin{pmatrix} \mathbb{I}^GC_0&0 \\ 0&0 \end{pmatrix}=0.
\end{equation}
Note
\begin{align*}
&(0,I)\Psi_1(T)\left(\begin{array}{c} 0\\{-C_0^T+C_0^T(G-\Xi_1^G)\mathbb{I}^GR_0^{-1}} \end{array}\right)
=\exp(-A^T)(-C_0^T+C_0^T(G-\Xi_1^G)\mathbb{I}^GR_0^{-1})=0
\end{align*}implies that
$C_0^T[I-(G-\Xi_1^G)\mathbb{I}^GR_0^{-1}]=0,
$
i.e.,
$C_0^T \mathbb{I}^G=0.$
Since $C_0\not=0$ and $\mathbb{I}^G$ is invertible, we have
$$(0,I)\Psi_1(T)\left(\begin{array}{c} 0\\{-C_0^T+C_0^T(G-\Xi_1^G)\mathbb{I}^GR_0^{-1}} \end{array}\right)\not=0.$$
Note that
\begin{align*}
&(0,I)\Psi_1(T)\left(\begin{array}{c} 0\\I \end{array}\right)
=(0,I)\left(\begin{array}{cc} \exp(AT)&0\\ \sum_{n=1}^{\infty}\frac{\Lambda_n T^n}{n!}&\exp(-A^TT) \end{array}\right)
\left(\begin{array}{c} 0\\I \end{array}\right)\\
&=\Big( \sum_{n=1}^{\infty}\frac{\Lambda_n T^n}{n!}\ \ \exp(-A^TT)\Big)\left(\begin{array}{c} 0\\I \end{array}\right)
=\exp(-A^TT).
\end{align*}
We have $(0,I)\Psi_1(T)\left(\begin{array}{c} 0\\I \end{array}\right)$ is invertible, and
\begin{align}\label{eq44b}
(0,I)\Psi_1(T,t)(R_0^{-1}\mathbb{I}^G,I)^T
=\ &(0,I)\left(\begin{array}{cc} \exp[A(T-t)]&0\\  \sum_{n=1}^{\infty}\frac{\Lambda_n (T-t)^n}{n!} &\exp[-A^T(T-t)] \end{array}\right)\left(\begin{array}{c} \mathbb{I}_GR^{-1}_0\\I \end{array}\right)\cr
=\ &\Big(  \sum_{n=1}^{\infty}\frac{\Lambda_n (T-t)^n}{n!} ,\ \exp(-A^T(T-t)\Big)\left(\begin{array}{c} \mathbb{I}_GR_0^{-1}\\I \end{array}\right)\cr
=\ &  \sum_{n=1}^{\infty}\frac{\Lambda_n (T-t)^n}{n!} \mathbb{I}_GR_0^{-1}+\exp[-A^T(T-t)].
\end{align}
From the above analysis and Theorem 3.2 in \cite{Y06}, we have the following sufficient condition for solvability of FBSDE (\ref{eq42}).
\begin{proposition}\label{prop7}
	Let \emph{(H1)} hold. Then 
	\emph{(\ref{eq42})} is solvable if
$\sum_{n=1}^{\infty}\frac{\Lambda_n T^n}{n!}\mathbb{I}^GC_0\!=\!0$ and $(0,I)\Psi_1$ $(T,\cdot)(R_0^{-1}\mathbb{I}^G,I)^T$ is full-rank.
\end{proposition}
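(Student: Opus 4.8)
The plan is to recast FBSDE (\ref{eq42}) as a linear (affine) fully-coupled FBSDE of the form treated by the reduction decoupling method of \cite{Y06}, and then to identify the two stated conditions with the hypotheses of Theorem 3.2 therein. First I would substitute $f_2=\mathbb{I}^G(R_0^{-1}z+C_0h+R_0^{-1}\hat{\beta}_0)$ into the drift of $y$, so that (\ref{eq42}) becomes a closed system in the triple $(h,y,z)$ whose forward/backward drift and diffusion coefficients are constant matrices acting on $(h,y,z)$, plus inhomogeneous terms built from the \emph{known} processes $\hat{x},\hat{\beta}_0$ (and the constants $\Xi_2,\Xi_2^G$), which lie in $L^2_{\mathbb F}$ by (H1). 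In this form $(h,y)$ is the forward--backward state, $z$ enters linearly in both equations, and eliminating $z$ through the $W_0$-diffusion relations produces exactly the ``Hamiltonian-type'' matrix $\left(\begin{smallmatrix} A & 0 \\ \widehat{A} & -A^T \end{smallmatrix}\right)$ whose fundamental solution is the matrix $\Psi_1(\cdot,\cdot)$ introduced above.

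The key steps are then as follows. (i) The only genuine coupling obstruction to decoupling (\ref{eq42}) is the occurrence of $z$ in the forward diffusion through the block $\mathbb{I}^G C_0$ (the $\mathbb{I}^G R_0^{-1}$-part does not mix $h$ and $y$ and is harmless); the reduction decoupling construction requires this block, after transport to the terminal time by $(0,I)\Psi_1(T,0)$, to vanish --- this is precisely the identity $(0,I)\Psi_1(T)\left(\begin{smallmatrix}\mathbb{I}^G C_0 & 0 \\ 0 & 0\end{smallmatrix}\right)=0$ of (\ref{eq44a}), equivalently $\sum_{n=1}^{\infty}\frac{\Lambda_n T^n}{n!}\mathbb{I}^G C_0=0$. (ii) Existence of a decoupling field matching the terminal datum $y(T)=(G-\Xi_1^G)\hat{x}(T)-\Xi_2^G$ on the whole of $[0,T]$ requires the propagated matrix $(0,I)\Psi_1(T,\cdot)(R_0^{-1}\mathbb{I}^G,I)^T$ to remain full-rank, which is exactly the quantity computed in (\ref{eq44b}). (iii) Granted these two conditions, \cite[Theorem 3.2]{Y06} delivers a unique adapted solution $(h,y,z)$ of the homogeneous system, and the affine inhomogeneous terms are then absorbed by the affine version of that theorem (or by variation of constants once the homogeneous decoupling is in place). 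Solvability of the full system (\ref{eq22}) follows from Remark \ref{rem5.1}, since $(\hat{x},\hat{p},\hat{\beta}_0)$ is decoupled from $(h,y,z)$ and has the same coupling structure.

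The main difficulty I anticipate is not any single computation but the verification that our \emph{indefinite} FBSDE genuinely fits the structural hypotheses of \cite[Theorem 3.2]{Y06}: because the weights here are singular or sign-indefinite (the min--max/soft-constraint feature), the monotonicity method is unavailable, so one must check carefully that the coefficient matrices carry the block-triangular structure the reduction method demands, and that the two displayed conditions are exactly what the decoupling transformation needs to be carried out and to remain invertible up to the terminal time. A secondary, routine point is confirming that the inhomogeneities $(Q-\Xi_1)\hat{x}-\Xi_2$, the cross terms $A^T(G-\Xi_1^G)h+(G-\Xi_1^G)Ah+C_0^T(G-\Xi_1^G)f_2$, $R_0^{-1}\hat{\beta}_0$, and the terminal value involving $\hat{x}(T)$ all belong to the admissible affine class, which holds since $\hat{x},\hat{\beta}_0\in L^2_{\mathbb F}$ under (H1).
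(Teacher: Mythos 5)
Your proposal is correct and follows essentially the same route as the paper: compute the fundamental matrix $\Psi_1$ of the Hamiltonian-type system, identify the two displayed conditions with (\ref{eq44a}) and the full-rank requirement in (\ref{eq44b}), and invoke Theorem 3.2 of \cite{Y06}, with the inhomogeneous terms absorbed since $\hat{x},\hat{\beta}_0\in L^2_{\mathbb F}$ under (H1). One small slip in your narrative: in the forward diffusion $\mathbb{I}^G(C_0h+R_0^{-1}(z+\hat\beta_0))dW_0$ the block $\mathbb{I}^GC_0$ multiplies $h$ while $z$ enters through $\mathbb{I}^GR_0^{-1}$ (which is what makes the full-rank condition on $(0,I)\Psi_1(T,\cdot)(R_0^{-1}\mathbb{I}^G,I)^T$ the relevant one for solving for $z$), but this does not affect the validity of the argument since the conditions you verify are the correct ones.
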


\begin{example}	Consider the system \emph{(\ref{eq1})}--\emph{(\ref{eq2})} with parameters
$$A\!=\!\!\left(\begin{array}{cc}
	2&0\\0&0 \end{array}\right),~ C_0\!=\!\!\left(\begin{array}{cc}
	3&1\\0&2\end{array}\right),~Q=\left(\begin{array}{cc}
	1&0\\0&0.4 \end{array}\right),~R_0=\left(\begin{array}{cc}
	0.1&0\\0&2 \end{array}\right),~\Gamma=\left(\begin{array}{cc}
	1&0\\0&0.5 \end{array}\right),~G=0.$$
	We have $\widehat{A}=\left(\begin{array}{cc}
	0&0\\0&-0.1 \end{array}\right)$ and $\Lambda_n=0, n=1,2,\cdots$. From \emph{(\ref{eq44b})}, we have
$$(0,I)\Psi_1(T,\cdot)(R_0^{-1}\mathbb{I}^G,I)^T=\exp[-A^T(T-t)]$$
is of row full-rank. By Proposition \ref{prop7}, FBSDE \emph{(\ref{eq42})} is solvable.
\end{example}

\section{Asymptotic optimality}\label{sec8}Based on results of Section 6, we may assume the off-line system (\refeq{eq22}) and consistency system (\ref{eq43}) are well-posed (we do not specify which concrete conditions leads to it because our analysis below only requires the wellposedness of these FBSDE systems) thus the decentralized control set $\check{u}=(\check{u}_1, \cdots, \check{u}_{N})$ is well-defined through (\ref{eq24}). The main theorem of this section states the asymptotic robust social optimality of decentralized decision $\check{u}.$

\begin{definition}
	A set of control laws $\check{u}=(\check{u}_1,\cdots,\check{u}_N)$ has asymptotic robust social optimality, if
	\begin{equation} \label{soc}\left|\frac{1}{N}J_{\rm soc}^{\rm wo}(\check{u})-\frac{1}{N}\inf_{u_i\in{\mathcal U}_c }
	J_{\rm soc}^{\rm wo}({u})\right|=o(1).
	\end{equation}
\end{definition}\begin{theorem}\label{thm10}
	Assume that \emph{(H1)}, \emph{(H2$^\prime$)} and \emph{(H3)} hold, and \emph{(\refeq{eq22})} and \emph{(\ref{eq43})} admit a unique solution, respectively. Then the set of control laws \emph{(\ref{eq24})} has asymptotic robust social optimality with
	$$\left|\frac{1}{N} J_{\rm soc}^{\rm wo}(\check{u})-\frac{1}{N}\inf_{u_i\in{\mathcal U}_c }
	J_{\rm soc}^{\rm wo}({u}) \right|=O(\frac{1}{\sqrt{N}}).$$
\end{theorem}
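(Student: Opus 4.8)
The plan is to combine the convexity of the worst-case social cost $J_{\rm soc}^{\rm wo}$ in the team control with a quantitative mean-field consistency estimate for the closed-loop system generated by $\check u$. I would first record what is already available: under \textbf{(H2$^\prime$)} the inner problem (P1$^\prime$) has, for every $u$, a unique minimizer $\sigma_0^*(u)$ depending affinely on $u$ by the linear-quadratic structure, so $J_{\rm soc}^{\rm wo}(u)=J_{\rm soc}^{(N)}(u,\sigma_0^*(u))$ is a genuine quadratic functional of $u$; by \textbf{(H3)} it is convex, hence Problem (P2) admits a centralized minimizer $\bar u=(\bar u_1,\dots,\bar u_N)\in\prod_{i=1}^N{\cal U}_c^r$, and since $\check u_i\in{\cal U}_i^r\subset{\cal U}_c^r$ one automatically has $J_{\rm soc}^{\rm wo}(\check u)\ges \inf_{u_i\in{\cal U}_c}J_{\rm soc}^{\rm wo}(u)=J_{\rm soc}^{\rm wo}(\bar u)$; it therefore suffices to bound $J_{\rm soc}^{\rm wo}(\check u)-J_{\rm soc}^{\rm wo}(\bar u)$ from above by $O(\sqrt N)$. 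Standard coercivity (using $R>0$, together with \textbf{(H2$^\prime$)} to bound $\sup_{\sigma_0}J_{\rm soc}^{(N)}(0,\sigma_0)=O(N)$) gives $\|\bar u\|_{L^2}^2=O(N)$, and since $\check u$ is built from the uniformly bounded solution of the consistency system (\ref{eq43}), $\|\check u\|_{L^2}^2=O(N)$ and hence $\|\bar u-\check u\|_{L^2}^2=O(N)$.

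The core estimate is the mean-field consistency rate for the \emph{realized} system: when every agent applies $\check u_j$ and the adversary plays the corresponding worst-case volatility $\sigma_0^*(\check u)=-R_0^{-1}\check\beta_0^{(N)}$ from (\ref{eq4a}), I claim
\[
\|\check x^{(N)}-\hat x\|_{\max}^2+\|\check\beta_0^{(N)}-\hat\beta_0\|_{L^2}^2=O(1/N),
\]
where $(\hat x,\hat\beta_0,h,y,z)$ solves the off-line system (\ref{eq22})/(\ref{eq43}) with the embedding representation $\hat x=\mathbb{E}_{{\mathcal F}^0}[x_i]$. I would prove this by subtracting the limiting FBSDE (\ref{eq8b})/(\ref{eq22}) from the $N$-particle closed-loop FBSDE (\ref{eq4a}): conditionally on ${\mathcal F}^0$ the $N$ component systems are identically distributed with conditional mean $\hat x$, and they are coupled only through the $O(1/N)$-strength terms $\Xi_1x^{(N)}$ and $-R_0^{-1}\check\beta_0^{(N)}$, so $\check x^{(N)}-\hat x=\frac1N\sum_j(x_j-\mathbb{E}_{{\mathcal F}^0}[x_j])$ is, up to a weak-coupling correction, an average of conditionally independent zero-mean terms. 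A Gronwall argument on $\mathbb{E}\sup_{0\les t\les T}|\check x^{(N)}-\hat x|^2+\mathbb{E}\int_0^T|\check\beta_0^{(N)}-\hat\beta_0|^2dt$, closed with the Burkholder--Davis--Gundy inequality for the $\frac1N\sum_j(\cdot)dW_j$ contributions and the stability of the coupled FBSDEs provided by the well-posedness hypothesis (Section 6), then yields the $O(1/N)$ rate; the same argument gives $\|\check p^{(N)}-\hat p\|_{L^2}^2=O(1/N)$. This is the bound anticipated in the Remark following (\ref{eq14b}).

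With this in hand I would show that $\check u$ nearly satisfies the person-by-person first-order condition. Computing the Fr\'echet differential of $J_{\rm soc}^{\rm wo}$ at $\check u$ in direction $\delta u_i$ exactly as in (\ref{eq14b})--(\ref{eq20d}), but with $\bar x,\bar p,\bar\beta$ replaced by the realized $\check x,\check p,\check\beta$, and then comparing with the auxiliary functional $\hat J_i$ of (P3), one obtains
\[
\langle\mathcal{D}_{u_i}J_{\rm soc}^{\rm wo}(\check u),\delta u_i\rangle=\langle\mathcal{D}_{u_i}\hat J_i(\check u_i),\delta u_i\rangle+\epsilon_i^{(N)}(\delta u_i),\qquad |\epsilon_i^{(N)}(\delta u_i)|\les \tfrac{c}{\sqrt N}\,\|\delta u_i\|_{L^2},
\]
the error being controlled by $\|\check x^{(N)}-\hat x\|_{\max}+\|\check\beta_0^{(N)}-\hat\beta_0\|_{L^2}$ through the estimate above (this is the role of $\epsilon_1^{(N)}$ in the Remark, the per-agent controls being uniformly bounded). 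Since $\check u_i$ is the optimal control of the unconstrained problem (P3), $\mathcal{D}_{u_i}\hat J_i(\check u_i)=0$ — equivalently the integrand in (\ref{eq25b}) vanishes identically — so $\|\mathcal{D}_{u_i}J_{\rm soc}^{\rm wo}(\check u)\|_{L^2}\les c/\sqrt N$ uniformly in $i$, whence $\|\mathcal{D}_{u}J_{\rm soc}^{\rm wo}(\check u)\|_{L^2}^2=\sum_{i=1}^N\|\mathcal{D}_{u_i}J_{\rm soc}^{\rm wo}(\check u)\|_{L^2}^2=O(1)$.

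Finally I would close with convexity: because $J_{\rm soc}^{\rm wo}$ is convex and Fr\'echet differentiable in $u$,
\[
0\ \les\ J_{\rm soc}^{\rm wo}(\check u)-J_{\rm soc}^{\rm wo}(\bar u)\ \les\ \langle\mathcal{D}_{u}J_{\rm soc}^{\rm wo}(\check u),\check u-\bar u\rangle\ \les\ \|\mathcal{D}_{u}J_{\rm soc}^{\rm wo}(\check u)\|_{L^2}\,\|\bar u-\check u\|_{L^2}=O(1)\cdot O(\sqrt N)=O(\sqrt N),
\]
and dividing by $N$ yields the asserted $O(1/\sqrt N)$ rate. The main obstacle is the mean-field estimate of the second paragraph: because of the volatility uncertainty the closed-loop system (\ref{eq4a})/(\ref{eq43}) is a fully coupled \emph{nonstandard} FBSDE whose adjoint component $\check\beta_0^{(N)}$ — the diffusion term of a BSDE, hence of lower regularity — feeds back into the worst-case volatility, so obtaining the \emph{sharp} $1/N$ variance rate for it (rather than a cruder bound) requires the conditional-i.i.d.\ decomposition together with BDG/Gronwall estimates carried out compatibly with the global-solvability structure of Section 6; everything else is a routine perturbation and coercivity computation.
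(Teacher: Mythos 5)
Your proposal is correct and follows essentially the same route as the paper's proof: the $O(1/N)$ mean-field consistency estimate for $(\check x^{(N)},\check p^{(N)},\check\beta_0^{(N)})$ against the consistency-system limit (the paper's Proposition \ref{13}), the $O(N)$ bounds on the realized cost and on admissible competitors, the near-vanishing of the componentwise Fr\'echet derivatives through $\mathcal{D}_{u_i}\hat J_i(\check u)=0$, and a convexity-plus-Cauchy--Schwarz conclusion giving the $O(\sqrt N)$ gap. The only (immaterial) difference is the final packaging: you apply the convex gradient inequality at an exact minimizer $\bar u$ --- whose existence should be justified by coercivity ($R>0$) and weak lower semicontinuity, or avoided altogether by comparing, as the paper does via its quadratic representation $J_{\rm soc}^{\rm wo}(u)=J_{\rm soc}^{\rm wo}(\check u)+J_{\rm soc}^{0}(u-\check u)+\sum_i\langle\mathcal{D}_{u_i}J_{\rm soc}^{\rm wo}(\check u),u_i-\check u_i\rangle$, against arbitrary controls in the sublevel set $\{u: J_{\rm soc}^{\rm wo}(u)\le J_{\rm soc}^{\rm wo}(\check u)\}$.
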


\subsection{A quadratic functional representation}To verify asymptotic social optimality (\ref{soc}), it is helpful to construct some quadratic representation of worse-case functional ${J}_{\rm soc}^{\rm wo}(u)$ for $u=(u_{1}, \cdots, u_{N}) \in \mathbb{R}^{r \times N}.$
First, recall the compact notations introduced in Section \ref{sec3}, and denote $\textbf{R}=\diag(R, \cdots,R)$, $\bar{\beta}^i=[(\beta_1^i)^T,\cdots,(\beta_N^i)^T]^T$, $i=0,1,\cdots,N$. Then we can rewrite  state (\ref{eq9}) and cost functional (\ref{eq8d})
as follows:
\begin{equation}\label{comp1}
\left\{\begin{aligned}%
d\textbf{x} =& (\textbf{Ax}+\textbf{Bu}+\textbf{1}\otimes f)dt+\sum_{i=1}^N(\textbf{D}_i\textbf{u}+{\mathbf{\sigma}_i}) dW_i\cr&+(\textbf{C}_0\textbf{x}+\textbf{D}_0\textbf{u}-\frac{1}{N}(\textbf{1}\textbf{1}^T\otimes R_0^{-1})\bar{\beta}^0dW_0,\ \\
d\textbf{p}\!=\!&-(\textbf{A}^T\textbf{p}\!-\!\hat{\textbf{Q}}\textbf{x}\!+\!\hat{\bf{\eta}}\!+\!\textbf{C}_0^T\bar{\beta}^0)dt\!+\!\sum_{i=1}^N\bar{\beta}^i dW_i\!+\!\bar{\beta}^0dW_0,
\cr \textbf{x}(0)=&\textbf{x}_{0},\ \textbf{p}(T)\!=\!-{\hat{\textbf{G}}}\textbf{x}(T)\!+\!\hat{\bf{\eta}}_0,
\end{aligned}\right.
\end{equation}
and
\begin{align}\label{comp2}
{J}_{\rm soc}^{\rm wo}(\textbf{u})
=\frac{1}{2}\mathbb{E}\int_0^{T}\Big(|\textbf{x}|_{\hat{\textbf{Q}}}^2-2\hat{\bf{\eta}}^T\textbf{x}
+|\textbf{u}|^2_{\textbf{R}}-
\frac{1}{N}|\bar{\beta}^0|_{\textbf{1}\textbf{1}^T\otimes R_0^{-1}}^2\Big)dt+\frac{1}{2}\mathbb{E}\big(|\textbf{x}(T)|^2_{\hat{\textbf{G}}}-2\hat{\bf{\eta}}^T_0\textbf{x}(T)\big),
\end{align}
where $\hat{\bf{\eta}}=\textbf{1}\otimes \Xi_2$, $\hat{\bf{\eta}}_0=\textbf{1}\otimes \Xi_2^G$, and $\hat{\textbf{Q}}=(\hat{Q}_{ij})$,
$\hat{\textbf{G}}=(\hat{G}_{ij})$ are given by
(\ref{eq3a}). Recall by Remark \ref{rem1}, we may exchange the usage $\textbf{u}=(u_1^T,\cdots,u_N^T)^T$ with $u=(u_{1}, \cdots, u_{N}).$

Moreover, by the superposition property of linear system (\ref{comp1}), a straightforward calculation implies that for any $(\textbf{u}^{1}, \textbf{u}^{2}; \textbf{x}^{1}_{0}, \textbf{x}^{2}_{0}; \hat{{\eta}}_{0}^{1}, \hat{{\eta}}_{0}^{2})$,\begin{equation*}
\begin{split}
&J_{\rm soc}^{\rm wo}(\textbf{u}^{1}+\textbf{u}^{2}; \textbf{x}^{1}_{0}+\textbf{x}^{2}_{0}; \hat{{\eta}}_{0}^{1}+\hat{{\eta}}_{0}^{2})+J_{\rm soc}^{\rm wo}(\textbf{u}^{1}-\textbf{u}^{2}; \textbf{x}^{1}_{0}-\textbf{x}^{2}_{0}; \hat{{\eta}}_{0}^{1}-\hat{{\eta}}_{0}^{2})\cr=&2\left(J_{\rm soc}^{\rm wo}(\textbf{u}^{1}; \textbf{x}^{1}_{0}; \hat{{\eta}}_{0}^{1})+J_{\rm soc}^{\rm wo}(\textbf{u}^{2}; \textbf{x}^{2}_{0}; {\hat{\eta}}_{0}^{2})\right).
\end{split}
\end{equation*}Thus, $J_{\rm soc}^{\rm wo}$ satisfies the parallelogram law and it is a quadratic functional with respect to control process $\textbf{u}(\cdot)$ 
and initial-terminal condition pair $(\textbf{x}_0, \hat{\eta}_0)$. Then, by the symmetric peoprty of ${J}_{\rm soc}^{\rm wo}(u)$ to inputs $(\textbf{u}(\cdot);\textbf{x}_{0};\hat{\eta}_0)$, the following quadratic representation holds true: \begin{equation}\label{eq59}
\begin{aligned}
J_{\rm{soc}}^{\rm{wo}}(\textbf{u}(\cdot);\textbf{x}_{0};\hat{\eta}_0)=& \langle \textbf{M}_1(\textbf{u}),\textbf{u}\rangle
+ 2\langle \textbf{M}_{12}(\textbf{x}_{0},\hat{\eta}_0),\textbf{u}\rangle
+ \langle \textbf{M}_2(\textbf{x}_{0},\hat{\eta}_0),(\textbf{x}_{0},\hat{\eta}_0)\rangle \cr&+ 2\langle \textbf{M}_{13},\textbf{u}\rangle
+ 2\langle \textbf{M}_{23},(\textbf{x}_{0},\hat{\eta}_0)\rangle + \textbf{M}_{3},
\end{aligned}
\end{equation}for linear bounded self-adoint operators: $
\textbf{M}_1: \mathcal{U}^{\otimes N}_{c} \rightarrow \mathcal{U}^{\otimes N}_{c}, \  \textbf{M}_2:\mathbb{S}^{nN \times nN}, \ \textbf{M}_3 \in \mathbb{R},
$ and linear bounded operators: $
\textbf{M}_{12}:~
\mathbb{R}^{nN} \times \mathbb{R}^{nN} \rightarrow \mathcal{U}^{\otimes N}_{c},\    \textbf{M}_{13} \in \mathcal{U}^{\otimes N}[0,T],\   \textbf{M}_{23}\in
\mathbb{R}^{nN}$ where $\mathcal{U}_{c}^{\otimes N}=
\underbrace{\mathcal{U}_{c} \times  \cdots \times \mathcal{U}_{c}}_{N-\text{fold}},$ where $\langle\cdot \rangle$ denotes the inner product in the sense of $dt\otimes d\mathbb{P}$.   More precisely, we have the following representations.
For operator $\textbf{M}_1$,\begin{equation*}\left\{
\begin{aligned}
&\textbf{M}_1 (\textbf{u})=\textbf{R}\textbf{u}+\textbf{B}^T \textbf{m}_1 + \sum_{i=1}^{N}\textbf{D}_i^T\textbf{n}_1^i + \textbf{D}_0^T\textbf{n}_1^0;\\
&\langle \textbf{M}_1(\textbf{u}),\textbf{u}\rangle=\mathbb{E} \int_{0}^{T} \langle \textbf{R}\textbf{u} + \textbf{B}^T \textbf{m}_1 + \sum_{i=1}^{N}\textbf{D}_i^T\textbf{n}_1^i + \textbf{D}_0^T\textbf{n}_1^0,\textbf{u} \rangle ds,
\end{aligned}\right.
\end{equation*}with\begin{equation}\label{eq60}
\left\{
\begin{aligned}
&d\textbf{m}_1 = -(\textbf{A}^T\textbf{m}_1 + \textbf{C}_0^T\textbf{n}_1^0 + \hat{\textbf{Q}}\textbf{x}_1 + \hat{\textbf{Q}}\textbf{y}_1)dt +\sum_{i=1}^{N}\textbf{n}_1^idW_i +\textbf{n}_1^0dW_0 ,\\
&d\textbf{y}_1 = \textbf{A}\textbf{y}_1dt + (\textbf{C}_0\textbf{y}_1+N\textbf{R} \otimes \textbf{q}_1^0+\textbf{R}{\otimes}\textbf{n}_1^0)dW_0,\\
&\textbf{y}_1(0) = 0, \quad \quad \textbf{m}_1(T)=\hat{\textbf{G}}(\textbf{y}_1(T) +\textbf{x}_1(T)),\end{aligned}\right.
\end{equation}\begin{equation}\label{eq52}
\left\{
\begin{aligned}
&d\textbf{x}_1 = (\textbf{A}\textbf{x}_1\!+\!\textbf{B}\textbf{u})dt \!+\! \sum_{i=1}^N \textbf{D}_i\textbf{u}dW_i \!+\! (\textbf{C}_0\textbf{x}_1\!+\!\textbf{D}_0\textbf{u}\!-\!\frac{1}{N}(\textbf{1}\textbf{1}^T\otimes R_0^{-1})\textbf{q}_1^0dW_0,\\
&d\textbf{p}_1 = -(\textbf{A}^T\textbf{p}_1-\hat{\textbf{Q}}\textbf{x}_1+\textbf{C}_0^T\textbf{q}_{1}^0)dt +\sum_{i=1}^{N}\textbf{q}_1^idW_i+\textbf{q}_1^0dW_0,\\
&\textbf{x}_1(0) =0, \quad \textbf{p}_1(T) =-\hat{\textbf{G}}\textbf{x}_1(T).
\end{aligned}\right.
\end{equation}
For operator $\textbf{M}_{12}$, we have
\begin{equation*}\left\{
\begin{aligned}
&\textbf{M}_{12}(\textbf{x},\hat{\eta}_0) = \textbf{B}^T\textbf{m}_{2}+\textbf{D}_0^T\textbf{n}^{0}_{2}+\sum_{i=1}^{N}\textbf{D}_i^T\textbf{n}_{2}^i;\\
&\langle \textbf{M}_{12}(\textbf{x},\hat{\eta}_0),\textbf{u} \rangle = \mathbb{E} \int_{0}^{T} \langle \textbf{B}^T\textbf{m}_{2}+\textbf{D}_0^T\textbf{n}_{2}^{0}+\sum_{i=1}^{N}\textbf{D}_i^T\textbf{n}_{2}^i,\textbf{u}\rangle dt,
\end{aligned}\right.
\end{equation*}
with\begin{equation*}\left\{
\begin{aligned}
&d\textbf{m}_2 = -(\textbf{A}^T\textbf{m}_2+\textbf{C}_0^T\textbf{n}_2^0+\hat{\textbf{Q}}\textbf{x}_2+\hat{\textbf{Q}}\textbf{y}_2)dt + \textbf{n}_2^0dW_0+ \sum_{i=1}^{N}\textbf{n}_2^idW_i, \quad  \\
&d\textbf{y}_2 = \textbf{A}\textbf{y}_2dt+(\textbf{C}_0\textbf{y}_2+N\textbf{R}{\otimes}\textbf{q}_2^0+\textbf{R}{\otimes}\textbf{n}_2^0)dW_0, \\&\textbf{m}_2(T)= \hat{\textbf{G}}(\textbf{y}_2(T)+\textbf{x}_2(T)),\quad \textbf{y}_2(0) = 0
\end{aligned}\right.
\end{equation*}and\begin{equation*}\left\{
\begin{aligned}
&d\textbf{x}_2 = \textbf{A}\textbf{x}_2dt + (\textbf{C}_0\textbf{x}_2-\textbf{R}{\otimes}\textbf{q}_2^0)dW_0, \quad  \\
&d\textbf{p}_2 = -(\textbf{A}^T\textbf{p}_2-\hat{\textbf{Q}}\textbf{x}_2+\textbf{C}_0^T\textbf{q}_2^0)dt + \sum_{i=1}^{N}\textbf{q}_2^idW_i + \textbf{q}_2^0dW_0,\\&\textbf{x}_2(0) = \textbf{x}_0,\quad \textbf{p}_2(T) = -\hat{\textbf{G}}\textbf{x}_2(T) +\hat{\eta}_0.
\end{aligned}\right.
\end{equation*}For operator $\textbf{M}_{13}$, we have\begin{equation*}\left\{
\begin{aligned}
&\textbf{M}_{13} = \textbf{B}^T\textbf{m}_{13}+\textbf{D}_0^T\textbf{n}_{13}^0+\sum_{i=1}^{N}\textbf{D}_i^T\textbf{n}_{13}^i,\\
&\langle \textbf{M}_{13},\textbf{u} \rangle = \mathbb{E} \int_{0}^{T} \langle \textbf{B}^T\textbf{m}_{13} + \textbf{D}_0^T\textbf{n}_{13}^0 +\sum_{i=1}^{N}\textbf{D}_i^T\textbf{n}_{13}^i,\textbf{u}\rangle dt
\end{aligned}\right.
\end{equation*}where\begin{equation*}\left\{
\begin{aligned}
&d\textbf{m}_{13} = -(\textbf{A}^T\textbf{m}_{13}+\textbf{C}_0^T\textbf{n}_{13}^0+\hat{\textbf{Q}}\textbf{x}_3+ \textbf{Q}\textbf{y}_{13} -2\hat{\eta}_s)ds + \textbf{n}_{13}^0dW_0+ \sum_{i=1}^{N}\textbf{n}_{13}^idW_i, \\
&d\textbf{y}_{13} = \textbf{A}\textbf{y}_{13}dt+(\textbf{C}_0\textbf{y}_{13}+N\textbf{R}{\otimes}\textbf{q}_{13}^0+\textbf{R}{\otimes}\textbf{n}_{13}^0)dW_0,\\
&\textbf{y}_{13}(0) = 0, \quad \textbf{m}_{13}(T)= \hat{\textbf{G}}(\textbf{y}_{13}(T)+\textbf{x}_3(T))-\hat{\eta}_0,
\end{aligned}\right.
\end{equation*}and\begin{equation*}\left\{
\begin{aligned}
&d\textbf{x}_3 = (\textbf{A}\textbf{x}_3+ \textbf{1} \otimes \textbf{f})dt +\sum_{i=1}^{N}\sigma_idW_i + (\textbf{C}_0\textbf{x}_3-\textbf{R}{\otimes}\textbf{q}_3^0)dW_0,\\
&d\textbf{p}_3 = -(\textbf{A}^T\textbf{p}_3-\hat{\textbf{Q}}\textbf{x}_3+\hat{\eta}+\textbf{C}_0^T\textbf{q}_3^0)dt + \sum_{i=1}^{N}\textbf{q}_3^idW_i + \textbf{q}_3^0dW_0,\\
&\textbf{x}_3(0) = \textbf{x}_0, \quad \textbf{p}_3(T) = -\hat{\textbf{G}}\textbf{x}_3(T).
\end{aligned}\right.
\end{equation*}
$\textbf{M}_2,\textbf{M}_{23}$, and $\textbf{M}_3$ can be defined similarly.
With above presentations, the Fr\'{e}chet differential of $J_{\rm{soc}}^{\rm{wo}}$ along the variation $\delta \textbf{u}$ can be represented as\begin{equation*}
\begin{aligned}
\quad& \delta J_{\rm{soc}}^{\rm{wo}}(\textbf{u}, \delta \textbf{u})=2\langle \textbf{M}_1 \textbf{u}+\textbf{M}_{12}(\textbf{x},\hat{\eta}_0)+\textbf{M}_{13},\delta \textbf{u}\rangle.
\end{aligned}
\end{equation*}

\subsection{Asymptotic optimality: four step procedure}Given the quadratic representation of $J_{\rm soc}^{\rm wo}({u}):=J^{(N)}_{\rm soc}({u}, \sigma_{0}^{*}(u))$ by (\ref{eq59}), we can verify the asymptotic robust optimality stated in Theorem \ref{thm10},  through the following steps.

\emph{Step 1}. We first analyze the asymptotic convergence of the realized state system. When each agent $\mathcal{A}_{i}$ applies the open-loop decentralized strategy $\check{u}_{i}$ as$$\check{u}_{i}=-R^{-1}\left(v_{i}+B^T (G-\Xi_1^G)h+D_0^T (G-\Xi_1^G)g_2\right),$$then the corresponding realized state $\check{x}_{i}$ is given by the following fully-coupled FBSDE subsystem together with backward and adjoint states $(\check{p}_{i}, \check{\beta}_{i}^{0}, \{\check{\beta}_{i}^{k}\}_{k=1}^{N})$ :
\begin{equation}\label{eq49}
\left\{
\begin{aligned}d\check{x}_{i}=&\left[A\check{x}_{i}-BR^{-1} \left(v_i +B^T(G-\Xi_1^G)h+D_0^T(G-\Xi_1^G)g_2\right)+ f\right]dt\\
&\!+\!\left[- DR^{-1} \left( v_i +B^T(G-\Xi_1^G)h+D_0^T(G-\Xi_1^G)g_2\right)+ \sigma\right]dW_i\\
&\!+\!\left[C_0\check{x}_{i}\!-\!DR^{-1} \left(v_i\!+\!B^T(G\!-\!\Xi_1^G)h\!+\!D_0^T(G\!-\!\Xi_1^G)g_2\right)\!-\!\sigma_{0}^{*}(\check{u})\right]dW_0,\\
d\check{p}_i=&-(A ^T\check{p}_i+C_0^T\check{\beta}_i^0-Q\check{x}_i+\Xi_1\check{x}^{(N)}+\Xi_2)dt+\check{\beta}_i^0dW_0+\sum_{k=1}^N\check{\beta}_i^kdW_k,\\
\check{x}_i(0)&=x, \quad
\check{p}_i(T)=(-G)\check{x}_i(T)+\Xi_1^G\check{x}^{(N)}(T)+\Xi_2^G
\end{aligned}
\right.
\end{equation}where $\check{x}^{(N)}=\frac{1}{N}\sum_{i=1}^{N}\check{x}_{i}, \sigma_{0}^{*}(\check{u})=\frac{R_0^{-1}}{N}\sum_{k=1}^N\check{\beta}_k^0.$
Moreover, $v_{i}=B^Tk_i+D_0^T\zeta_0+D^T\zeta_i$ is defined through the following CC system for a representative agent $\mathcal{A}_i$:
\begin{equation}\label{eq40}
\left\{ \begin{aligned}
dx_i=\ &\left[Ax_i-BR^{-1}(v_i+B^T(G-\Xi_1^G)h+D_0^T(G-\Xi_1^G)g_2)+f\right]dt\\
&+\left[ C_0x_i\!-\!D_0R^{-1}(v_i\!+\!B^T(G\!-\!\Xi_1^G)h\!+\!D_0^T(G\!-\!\Xi_1^G)g_2)\!-\!R_0^{-1}\hat{\beta}_0 \right]dW_0\\
&-\left[ D R^{-1}(v_i+B^T(G-\Xi_1^G)h+D_0^T(G-\Xi_1^G)g_2)-\sigma \right]dW_i,\\
dk_i=&-\big[A^T k_i+C^T_0\zeta_0+\!Qx_i-\Xi_1 \mathbb{E}_{{\mathcal F}^0}[x_i] -\Xi_2+(Q-\!\Xi_1)h\\
&+\mathcal{K}(G,g_2)-\mathcal{K}(\Xi_1^G,g_2)\big]\!dt+\zeta_0dW_0+\zeta_idW_i,\\
d\hat{p}=&-\big[A^T \hat{p}+C_0^T\hat{\beta}_0-(Q-\Xi_1)\mathbb{E}_{{\mathcal F}^0}[x_i]+\Xi_2\big]dt+\hat{\beta}_0dW_0, \\
dy=&-\big[A^T y+C_0^Tz+(Q-\Xi_1)h+(Q-\Xi_1)\mathbb{E}_{\mathcal{F}^0}[x_i]-\Xi_2\big]dt\\
&+[A^T(G-\Xi_1^G)h+(G-\Xi_1^G)Ah+C_0^T(G-\Xi_1^G)g_2]dt+zdW_0,\\
dh= &\ Ahdt+[I+R_0^{-1}(G-\Xi_1^G)]^{-1}[C_0h+R_0^{-1}(z+\hat{\beta}_0)]dW_0
\end{aligned} \right.
\end{equation}with the initial-terminal condition\begin{equation}\label{eq40ab}
\left\{ \begin{aligned}
& x_i(0)=x, \quad k_i(T)=Gx_i(T)-\Xi_1^G \mathbb{E}_{\mathcal{F}_T^0}[x_i(T)]-\Xi_2^G,\\
& \hat{p}(T)\!=\!(\Xi_1^G\!-\!G)\mathbb{E}_{\mathcal{F}_T^0}[x_i(T)]\!+\!\Xi_2^G,\ y(T)\!=\!(G\!-\!\Xi_1^G)\mathbb{E}_{\mathcal{F}_T^0}[x_i(T)]\!-\!\Xi_2^G,\ h(0)=0.\\
\end{aligned} \right.
\end{equation}Note that all such $N$-subsystems $\left(\check{p}_{j}, \check{\beta}_{j}^{0}, \{\check{\beta}_{j}^{k}\}_{k=1}^{N}\right)_{j=1}^{N}$ of (\ref{eq49}) are further coupled via the worst-volatility $\sigma^{*}_{0}=\sum_{k=1}^N\check{\beta}_k^0$ and they thus frame a fully-coupled and highly-dimensional FBSDE system in $\!L^{2}_{\cal F}(\Omega; C([0,\!T];\mathbb{R}^{nN}\!)) \times  L^{2}_{\cal F}(0, \!T;\mathbb{R}^{nN}\!))   \times L^{2}_{\mathcal{F}}(0, T; \mathbb{R}^{nN^2}\!)$. Regarding system (\ref{eq49})-(\ref{eq40}), we have the following prior estimate:
\begin{proposition}\label{13}
	Let \emph{(H1)}, \emph{(H2$^{\prime}$)} hold. Assume \emph{(\refeq{eq22})} and \emph{(\ref{eq43})} admit a unique solution, respectively. Then\begin{equation}\label{error}\mathbb{E}\sup_{0 \leq t \leq T}\left(|\check{x}^{(N)}-\mathbb{E}_{\mathcal{F}^0}[x_{i}]|^{2}+|\check{p}^{(N)}-\hat{p}|^2\right)+\mathbb{E}\int_{0}^{T}
	|R_{0}\sigma^{*}_{0}(\check{u})-\hat{\beta}_{0}|^{2}dt  \leq c_{0}(\frac{1}{N})\end{equation}for some constant $c_0>0$ independent of $N$ and $i.$ Here, $\check{p}^{(N)}=\frac{1}{N}\sum\check{p}_{i}.$\end{proposition}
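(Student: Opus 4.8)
The plan is to estimate the discrepancy between the realized coupled system \eqref{eq49} and the limiting (off-line) system \eqref{eq22}--\eqref{eq43} by a fixed-point/contraction argument on a finite interval, combined with the uniform convexity assumption (H2$^\prime$) which guarantees the relevant Riccati-type quantities stay nonsingular and hence the estimates remain uniform in $N$. First I would introduce the averaged error processes $\widehat{x}^{(N)}:=\check{x}^{(N)}-\mathbb{E}_{\mathcal{F}^0}[x_i]$, $\widehat{p}^{(N)}:=\check{p}^{(N)}-\hat{p}$, and $\widehat{\beta}^{(N)}:=R_0\sigma_0^*(\check u)-\hat\beta_0$, and write down the SDE/BSDE they satisfy by averaging \eqref{eq49} over $i$ and subtracting the corresponding equations from the CC system \eqref{eq43}. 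Crucially, the terms $v_i, h, g_2, z$ appearing in \eqref{eq49} are exactly those produced by the representative-agent CC system \eqref{eq40}, so the inhomogeneous forcing in the error equations consists only of (a) the difference between $\check{x}^{(N)}$ and $\mathbb{E}_{\mathcal F^0}[x_i]$ entering through $\Xi_1\check{x}^{(N)}$ versus $\Xi_1\mathbb{E}_{\mathcal F^0}[x_i]$, which is precisely $\Xi_1\widehat{x}^{(N)}$, and (b) a genuine \emph{empirical-average fluctuation} term of order $N^{-1/2}$ coming from the idiosyncratic noises: the average $\frac1N\sum_i \check{\beta}_i^k dW_k$ and the law-of-large-numbers gap $\frac1N\sum_i(Q\check x_i-\cdots)-\mathbb{E}_{\mathcal F^0}[\cdots]$.

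The key steps, in order, are: (i) derive the linear FBSDE satisfied by $(\widehat{x}^{(N)},\widehat{p}^{(N)},\widehat{\beta}^{(N)})$ and identify the forcing term $\epsilon^{(N)}$ as a martingale-type remainder; (ii) apply standard a priori $L^2$ estimates for (fully coupled, linear) FBSDEs — justified here because the limiting system \eqref{eq22} is assumed well-posed, so the associated decoupling field / Riccati operator is bounded and the FBSDE estimate constant is independent of $N$ — to bound $\mathbb{E}\sup_t(|\widehat{x}^{(N)}|^2+|\widehat{p}^{(N)}|^2)+\mathbb{E}\int_0^T|\widehat{\beta}^{(N)}|^2\,dt$ by $c\,\mathbb{E}\|\epsilon^{(N)}\|^2$; (iii) estimate $\mathbb{E}\|\epsilon^{(N)}\|^2=O(1/N)$ using mutual independence of $\{W_k\}_{k\ge1}$ and the uniform moment bound $\mathbb{E}\sup_t|\mathbf{x}(t)|^2\le c_1 N(\cdots)$ quoted at the end of Section \ref{sec2} (so $\frac1N\sum_i$ of i.i.d.-like terms has variance $O(1/N)$), together with the uniform bound $\sup_i\mathbb{E}\sup_t(|\check x_i|^2+|\check p_i|^2)+\mathbb{E}\int_0^T|\check\beta_i^0|^2\,dt\le c$, which itself follows from (H2$^\prime$)+(H3)+well-posedness by a parallel FBSDE estimate for a single index $i$.

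The main obstacle I expect is step (ii): establishing the a priori FBSDE estimate with an $N$-independent constant, because \eqref{eq49} is \emph{not} the limiting system — it is the $N$-agent system whose adjoint components $\check\beta_i^0$ are themselves coupled across $i$ through $\sigma_0^*(\check u)=\frac{R_0^{-1}}{N}\sum_k\check\beta_k^0$, and the state/control weights are indefinite (soft-constraint min-max structure), so monotonicity methods are unavailable (as the paper itself notes in Section 6). The way around this is to work with the \emph{averaged} system in $(\widehat x^{(N)},\widehat p^{(N)},\widehat\beta^{(N)})$, which is exactly of the same structural form as the low-dimensional limiting FBSDE \eqref{eq8b}/\eqref{eq22}; one then invokes (H2$^\prime$) via Proposition~\ref{13}'s hypotheses — uniform convexity of $\check J_{\rm soc}^{(N)}$ and of $\bar J_i'$, equivalently solvability of the Riccati equation \eqref{eq8e} with $K+R_0>0$ — to get a decoupling relation $\widehat p^{(N)}=\Sigma\,\widehat x^{(N)}+(\text{lower order})$ with $\Sigma$ bounded uniformly in $N$, reducing the coupled estimate to a Gronwall argument on the forward equation alone. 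Once the $O(1/N)$ bound on $\mathbb{E}\|\epsilon^{(N)}\|^2$ is in hand, the conclusion \eqref{error} follows immediately.
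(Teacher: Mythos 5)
Your proposal is correct and follows essentially the same route as the paper's own (rather terse) proof: aggregate the realized system \re{eq49} over $i$, compare it with the conditional-expectation dynamics coming from the CC system \re{eq40}, observe that the only genuine forcing in the error equations is the empirical average of the idiosyncratic stochastic integrals, whose variance is $O(1/N)$ by independence of the $W_i$ and the $L^2$-boundedness of $(v_i,h,g_2)$ guaranteed by the assumed well-posedness of \re{eq43}, and then invoke a continuous-dependence estimate for the coupled FBSDE. Your extra discussion of where the $N$-independent stability constant comes from (decoupling field/Riccati argument under (H2$'$)) is a reasonable elaboration of what the paper compresses into ``the iterative scheme of coupled FBSDE works.''
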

\begin{proof} Making the state aggregation of (\ref{eq49}), we have
	\begin{equation}
	\left\{
	\begin{aligned}
	\!d\check{x}^{(N)}=&\left[A\check{x}^{(N)}-BR^{-1} \left(v^{(N)} +B^T(G-\Xi_1^G)h+D_0^T(G-\Xi_1^G)g_2\right)+ f\right]dt\\
	\!	&+\frac{1}{N}\sum_{i=1}^N\left[- DR^{-1} \left( v_i +B^T(G-\Xi_1^G)h+D_0^T(G-\Xi_1^G)g_2\right)+ \sigma\right]dW_i\\
	&\!+\!\left[C_0\check{x}^{(N)}\!-\!DR^{-1} \left(v^{(N)}\!+\!B^T(G\!-\!\Xi_1^G)h\!+\!D_0^T(G\!-\!\Xi_1^G)g_2\right)\!-\!\sigma_{0}^{*}(\check{u})\right]dW_0,\\
	\!\!d\check{p}^{(N)}\!=\!&-\!\big[A ^T\check{p}^{(N)}\!\!+\!C_0^T\check{\beta}_0^{(N)}\!\!+\!(\Xi_1-Q)\check{x}^{(N)}\!\!+\!\Xi_2\big]dt\!+\!\check{\beta}_0^{(N)}dW_0\!+\!\frac{1}{N}\sum_{i=1}^N\sum_{k=1}^N\check{\beta}_i^kdW_k,\\
	\check{x}^{(N)}(0)&=x_0, \quad
	\check{p}^{(N)}(T)=(\Xi_1^G-G)\check{x}^{(N)}(T)+\Xi_2^G,
	\end{aligned}
	\right.
	\end{equation}where $v^{(N)}=\frac{1}{N}\sum_{i=1}^Nv_i$. By (\ref{eq40}), $\mathbb{E}_{\mathcal{F}^0}[x_{i}]$ satisfies
	\begin{equation}\begin{aligned}
	& d\mathbb{E}_{\mathcal{F}^0}[x_{i}]=\left[A\mathbb{E}_{\mathcal{F}^0}[x_{i}]-BR^{-1}(\hat{v}+B^T(G-\Xi_1^G)h+D_0^T(G-\Xi_1^G)g_2)+f\right]dt\\
	&+\big[ C_0\mathbb{E}_{\mathcal{F}^0}[x_{i}]-D_0R^{-1}(\hat{v}+B^T(G-\Xi_1^G)h+D_0^T(G-\Xi_1^G)g_2)-R_0^{-1}\hat{\beta}_0 \big]dW_0, \cr&
	\mathbb{E}_{\mathcal{F}^0}[x_{i}](0)=x_0,
	\end{aligned}\end{equation}
	where $\hat{v}=\mathbb{E}_{\mathcal{F}^0}[B^Tk_i+D_0^T\zeta_0+D^T\zeta_i]$.
Assume (\ref{eq43}) admits a unique solution thus its state component $(k_{i}, \zeta_0, \zeta_i, h)$ should have an upper bound in their $L^{2}-$norms. Thus, $\sup_{0 \leq t \leq T}\mathbb{E}I_{N}^{2}(t)=O(\frac{1}{N})$ with$$I_{N}:=\frac{1}{N}\sum_{i=1}^N\int_0^T\left[- DR^{-1} \left( v_i +B^T(G-\Xi_1^G)h+D_0^T(G-\Xi_1^G)g_2\right)+ \sigma\right]dW_i.$$
Moreover, wellposedness of (\ref{eq43}) implies some compatibility condition holds true and the iterative scheme of coupled FBSDE works. Then, we can apply the standard continuity-dependence estimate between system (\ref{eq49}) and system (\ref{eq40}) to get the estimate (\refeq{error}). \end{proof}

\emph{Step 2}. Given Step 1, we have the estimate to the realized social cost $J_{\rm soc}^{\rm wo}({\check{u}})$.
\begin{proposition}\label{14}  There exists a constant $c_1$ independent of $N$ such that
	$$J_{\rm soc}^{\rm wo}(\check{u})\les N c_1.$$
\end{proposition}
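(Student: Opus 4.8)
\emph{Proof strategy.} The plan is to exploit that, for an upper bound, the adversarial penalty only helps. Since $R_0>0$ by (H1) and (H2) holds (being implied by (H2$^\prime$)), Theorem \ref{thm3a} applied to the well-posed system \eqref{eq49} identifies $\sigma_0^*(\check u)$ as a maximizer of $J_{\rm soc}^{(N)}(\check u,\cdot)$, so that
\begin{equation*}
J_{\rm soc}^{\rm wo}(\check u)=J_{\rm soc}^{(N)}\big(\check u,\sigma_0^*(\check u)\big)=\sum_{i=1}^N J_i(\check u)-\frac N2\mathbb{E}\int_0^T|\sigma_0^*(\check u)|^2_{R_0}dt\ \les\ \sum_{i=1}^N J_i(\check u),
\end{equation*}
where each $J_i(\check u)$ is computed along the realized state $\check x_i$ of \eqref{eq49}. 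Hence it suffices to bound $J_i(\check u)\les c$ with $c$ independent of $N$ and $i$, and by the form of the cost \eqref{eq2} together with (H1) this reduces to the three $N$-uniform estimates
\begin{equation*}
\mathbb{E}\sup_{0\les t\les T}|\check x_i(t)|^2\les c,\qquad \mathbb{E}\sup_{0\les t\les T}|\check x^{(N)}(t)|^2\les c,\qquad \mathbb{E}\int_0^T|\check u_i(t)|^2dt\les c.
\end{equation*}

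I would first dispatch the control estimate. By \eqref{eq24}, $\check u_i=-R^{-1}\big(v_i+B^T(G-\Xi_1^G)h+D_0^T(G-\Xi_1^G)g_2\big)$, where $(v_i,h,g_2)$ are components of the solution of the consistency system \eqref{eq43}. That system carries no explicit dependence on $N$ --- its only mean-field ingredient, $\mathbb{E}_{{\mathcal F}^0}[x_i]$, being an internal unknown fixed by the embedding representation itself --- so the standing assumption that \eqref{eq43} is uniquely solvable yields $L^2$-bounds on all its state and adjoint components that do not depend on $N$; hence $\mathbb{E}\int_0^T|\check u_i|^2dt\les c$ uniformly in $N$ and $i$. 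Next I would control $\sigma_0^*(\check u)$ by invoking Proposition \ref{13}, which gives $\mathbb{E}\int_0^T|R_0\sigma_0^*(\check u)-\hat\beta_0|^2dt\les c_0/N$; since $\hat\beta_0$ is a component of the ($N$-independent) solutions of \eqref{eq22} and \eqref{eq43} and therefore obeys $\mathbb{E}\int_0^T|\hat\beta_0|^2dt\les c$, the triangle inequality gives $\mathbb{E}\int_0^T|\sigma_0^*(\check u)|^2dt\les c$ uniformly in $N$.

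With these two ingredients the state estimate becomes routine: I would read the forward equation for $\check x_i$ in \eqref{eq49} as a linear SDE in $\check x_i$ with drift forcing $-BR^{-1}(v_i+\cdots)+f$ and diffusion forcings $-DR^{-1}(v_i+\cdots)+\sigma$ on $dW_i$ and $-DR^{-1}(v_i+\cdots)-\sigma_0^*(\check u)$ on $dW_0$, each of which lies in $L^2$ with norm bounded independently of $N$ and $i$ by the previous paragraph and (H1). The standard moment inequality for linear SDEs then gives $\mathbb{E}\sup_{0\les t\les T}|\check x_i(t)|^2\les c$, and Jensen's inequality upgrades this to $\mathbb{E}\sup_{0\les t\les T}|\check x^{(N)}(t)|^2\les\frac1N\sum_{i=1}^N\mathbb{E}\sup_{0\les t\les T}|\check x_i(t)|^2\les c$. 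Substituting the three estimates into \eqref{eq2} and summing over $i$ then closes the argument: $J_{\rm soc}^{\rm wo}(\check u)\les\sum_{i=1}^N J_i(\check u)\les Nc_1$.

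The step I expect to be the real obstacle is the uniform bound on $\sigma_0^*(\check u)=\frac{R_0^{-1}}{N}\sum_{k=1}^N\check\beta_k^0$: the adjoint terms $\check\beta_k^0$ in \eqref{eq49} are coupled across all $N$ agents precisely through their own average, so they cannot be estimated one agent at a time, and disentangling this coupling is exactly the content of Proposition \ref{13} (a continuity-dependence estimate relative to the well-posed consistency system). Granting Proposition \ref{13}, the remaining work is bookkeeping with the standard linear-SDE moment inequalities and the coefficient bounds of (H1).
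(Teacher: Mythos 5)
Your argument is sound and leans on the same two pillars as the paper---the $N$-independence of the consistency system \eqref{eq43}, and Proposition \ref{13} to tame the worst-case volatility---but it organizes the estimate differently. The paper introduces an intermediate state $x_i$ driven by $-R_0^{-1}\hat\beta_0$ in place of $-\sigma_0^*(\check u)$, uses Proposition \ref{13} together with standard FBSDE estimates to get $\mathbb{E}\sup_{0\les t\les T}|\check x_i-x_i|^2\les c_1/N$, and then expands the full worst-case cost (soft-constraint penalty included) around the limiting quantities $(x_i,\hat x,\hat\beta_0)$ before bounding each piece. You instead discard the nonpositive penalty at the outset (legitimate since $R_0>0$, and $J_{\rm soc}^{\rm wo}(\check u)=J_{\rm soc}^{(N)}(\check u,\sigma_0^*(\check u))$ by the very definition \eqref{eq8d} under the standing assumptions), reduce to per-agent bounds $J_i(\check u)\les c$, and obtain these from direct moment estimates for the realized linear SDE in \eqref{eq49}, whose forcings are $L^2$-bounded uniformly in $N$: the control-related terms because the consistency system carries no $N$, and $\sigma_0^*(\check u)$ via Proposition \ref{13} plus the triangle inequality --- exactly the one place, as you note, where the cross-agent coupling of the $\check\beta_k^0$ must be handled and cannot be avoided. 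Your route is a bit more elementary and skips the cost expansion; the paper's intermediate-state comparison is heavier but yields the sharper $O(1/N)$ closeness of realized and limiting states, which it reuses later in the four-step verification, whereas your crude $O(1)$ bounds suffice for this proposition alone.
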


\begin{proof} Consider the following intermediate state:
	\begin{equation*}
	\begin{aligned}
	dx_i= &\left[ Ax_i -BR^{-1} \left( v_i +B^T(G-\Xi_1^G)h+D_0^T(G-\Xi_1^G)g_2\right)+ f\right]dt\\
	&+\left[- DR^{-1} \left( v_i +B^T(G-\Xi_1^G)h+D_0^T(G-\Xi_1^G)g_2\right)+ \sigma\right]dW_i\\
	&+\left[C_0x_i-DR^{-1} \big( v_i +B^T(G-\Xi_1^G)h+D_0^T(G-\Xi_1^G)g_2\right)-R_0^{-1}\hat{\beta}_0\big]dW_0.
	\end{aligned}
	\end{equation*}
	By Proposition \ref{13}, and standard FBSDE estimate, the following estimate holds:
	$$\mathbb{E}\sup_{0 \les t \les T} |\check{x}_{i}(t)-x_{i}(t)|^{2}  \les \frac{c_{1}}{N}.$$
	Then,
	\begin{align*}
	{J}_{\rm soc}^{\rm wo}(\check{u})
	=&\frac{1}{2}\sum_{i=1}^N
	\mathbb{E}\int_0^T\Big\{\big|(x_{i}-\Gamma \hat{x}-\eta)+(\check{x}_i-x_{i})+\Gamma(\hat{x}-\Gamma \check{x}^{(N)})\big|^2_{Q}\cr&\qquad\qquad\qquad+|\check{u}_i|^2_R-|(\sigma_0^*(\check{u})-\hat{\beta}_{0})+\hat{\beta}_{0}|^2_{R_0}\Big\}dt\cr
	&+
	\frac{1}{2}\mathbb{E}|(x_{i}(T)-\Gamma_{0}\hat{x}(T)-\eta_{0})+(\check{x}_i(T)-x_{i}(T))+\Gamma_0(\check{x}^{(N)}(T)-(\check{x}_i(T))|_G^2\cr
	\les& N c_{2} \big(\|f\|_{L^{2}}+\|\sigma\|_{L^{2}}+\|\Xi_{2}\|_{L^{2}}+\|\Xi_{2}^{G}\|_{L^{2}}+O(\frac{1}{N})\big) \les Nc.
	\end{align*}
\end{proof}

\emph{Step 3}. This step aims to address the convexity of $J_{\rm soc}^{\rm wo}({u})$ of (P2). By its quadratic representation (\ref{eq59}), it is equivalent to
$\langle \textbf{M}_1(\textbf{u}),\textbf{u}\rangle \ges 0$. 
Here,
$$\textbf{M}_1 (\textbf{u})=\textbf{R}\textbf{u}+\textbf{B}^T \textbf{m}_1 + \sum_{i=1}^{N}\textbf{D}_i^T\textbf{n}_1^i + \textbf{D}_0^T\textbf{n}_1^0$$
with $(\textbf{m}_1, \textbf{n}_1^i, \textbf{n}_1^0)$ given by (\ref{eq60}). By examining its coupling structure of (\ref{eq60})-(\ref{eq52}), it can be further reformulated via the following problem:
\begin{align*}
{J}_{\rm soc}^{0}(u)
=&\frac{1}{2}\sum_{i=1}^N
\mathbb{E}\int_0^T\Big\{\big|\grave{x}_i
-\Gamma \grave{x}^{(N)}\big|^2_{Q}+|u_i|^2_R-|\grave{\beta}_0^{(N)}|^2_{R_0^{-1}}\Big\}dt,
\end{align*}
where $\grave{\beta}_0^{(N)}=\frac{1}{N}\sum_{i=1}^N\grave{\beta}_i^0 $, and for $i=1, \cdots, N,$
\begin{equation*}\label{eq9b}
\left\{
\begin{aligned}
d\grave{x}_i=&(A\grave{x}_i+Bu_i)dt+Du_idW_i
+\Big(C_0\grave{x}_i+D_0u_i-\frac{R_0^{-1}}{N}\sum_{k=1}^N\grave{\beta}_k^0\Big)dW_0,\\
d\grave{p}_i=&-(A ^T\grave{p}_i+C_0^T\grave{\beta}_i^0-Q\grave{x}_i+\Xi_1\grave{x}^{(N)})dt+\grave{\beta}_i^0dW_0+\sum_{k=1}^N\grave{\beta}_i^kdW_k,\\
\grave{x}_i(0)&=0, \quad
\grave{p}_i(T)=(-G)\grave{x}_i(T)+\Xi_1^G\grave{x}^{(N)}(T).
\end{aligned}
\right.
\end{equation*}
Then $J_{\rm soc}^{\rm wo}({u})$ of (P2) is 
convex if and only if 
${J}_{\rm soc}^{0}(u)\ges 0.$
Notice the upper bound of realized cost functional $J_{\rm soc}^{\rm wo}(\check{u})$ by Proposition \ref{14}, it suffices to consider the perturbation control $\tilde{u}$ satisfying $J_{\rm soc}^{\rm wo}({\tilde{u}})
\les J_{\rm soc}^{\rm wo}(\check{u}) \les Nc_{1}.$ This further implies that
\begin{equation}\|\tilde{u}\|^2_{L^{2}}:=\sum_{i=1}^N\mathbb{E}\int_0^T|\tilde{u}_i(t)|^2dt
\les Nc \end{equation}
by noting (P2) is convex. Also, it implies $\|\delta \tilde{u}\|^2_{L^{2}}:=\sum_{i=1}^N\mathbb{E}\int_0^T|\delta \tilde{u}_i(t)|^2dt\leq Nc_{1}$ with $\delta u_{i}=\check{u}_{i}-\tilde{u}_{i}.$

\emph{Step 4}. This step discusses the Fr\'{e}chet differential of $J_{\rm soc}^{\rm wo}(u).$ Recall the quadratic functional (\ref{eq59}) and notation exchange between $\textbf{u}$ and $u$, we have
\begin{align*}
J_{\rm soc}^{\rm wo}(\textbf{u})=&\langle \textbf{M}_1(\textbf{u}),\textbf{u}\rangle
+ 2\langle \textbf{M}_{12}(\textbf{x}_{0},\hat{\eta}_0),\textbf{u}\rangle
+ \langle \textbf{M}_2(\textbf{x}_{0},\hat{\eta}_0),(\textbf{x}_{0},\hat{\eta}_0)\rangle
\cr&+ 2\langle \textbf{M}_{13},\textbf{u}\rangle
+ 2\langle \textbf{M}_{23},(\textbf{x}_{0},\hat{\eta}_0)\rangle + \textbf{M}_{3}\\
=&\langle \textbf{M}_1(\check{\textbf{u}}),\check{\textbf{u}}\rangle
+ 2\langle \textbf{M}_{12}(\textbf{x}_{0},\hat{\eta}_0),\check{\textbf{u}}\rangle
+ \langle \textbf{M}_2(\textbf{x}_{0},\hat{\eta}_0),(\textbf{x}_{0},\hat{\eta}_0)\rangle\cr&+ 2\langle \textbf{M}_{13},\check{\textbf{u}}\rangle
+ 2\langle \textbf{M}_{23},(\textbf{x}_{0},\hat{\eta}_0)\rangle + \textbf{M}_{3}\quad\quad\quad\quad\quad
(=J_{\rm soc}^{\rm wo}(\check{\textbf{u}}))\\&+\langle\textbf{M}_1( \textbf{u}-\check{\textbf{u}}), \textbf{u}-\check{\textbf{u}}\rangle
+2\langle \textbf{M}_{13},\textbf{u}-\check{\textbf{u}}\rangle\quad\quad  \quad \quad (=J_{\rm soc}^{0}(\textbf{u}-\check{\textbf{u}}))\\&+2\langle \textbf{M}_1 (\textbf{u})+\textbf{M}_{12}(\textbf{x},\hat{\eta}_0)+\textbf{M}_{13}, \textbf{u}-\check{\textbf{u}}\rangle \quad\quad\quad \quad (=\langle \mathcal{D}_{\textbf{u}}J_{\rm soc}^{\rm wo}(\check{\textbf{u}}), \textbf{u}-\check{\textbf{u}}\rangle)\\=&J_{\rm soc}^{\rm wo}(\check{u})+J_{\rm soc}^{0}(u-\check{u})+\sum_{i=1}^{N}\langle \mathcal{D}_{u_{i}}J_{\rm soc}^{\rm wo}(\check{u}), u_{i}-\check{u}_{i}\rangle\end{align*}
where $\mathcal{D}_{u_{i}}J_{\rm soc}^{\rm wo}(\check{u})$ given by (\ref{eq225}) is the componentwise Fr\'{e}chet derivative of $J_{\rm soc}^{\rm wo}$ at $\check{u}$ on $i^{th}$-component coordinate. Moreover, for $u$, by examining the person-by-person optimality procedures in Section \ref{sec4.1}, and duality expression \eqref{eq20d} for auxiliary cost $\hat{J}_{i}$, we have
$$\|\mathcal{D}_{u_{i}}J_{\rm soc}^{\rm wo}(\check{u})-\mathcal{D}_{u_{i}}\hat{J}_{i}(\check{u})\|_{L^{2}}\les \frac{c}{\sqrt{N}}\|\check{u}\|_{L^{2}}$$
for some constant $c$ independent of $N$ and $\check{u}.$

\ms

\begin{proof}(Proof of Theorem \ref{thm10}). Notice that
$$\left|\frac{1}{N} J_{\rm soc}^{\rm wo}(\check{u})-\frac{1}{N}\inf_{u_i\in{\mathcal U}_c }
	J_{\rm soc}^{\rm wo}({u}) \right|=O(\frac{1}{\sqrt{N}})$$
is equivalent to
$$\inf_{u_i\in{\mathcal U}_c }
	J_{\rm soc}^{\rm wo}({u}) \les J_{\rm soc}^{\rm wo}(\check{u}) \les \inf_{u_i\in{\mathcal U}_c }
	J_{\rm soc}^{\rm wo}({u})+O({\sqrt{N}}).$$
The first inequality is trivial. For the second inequality, we need only consider the perturbed control $u$ satisfying $J_{\rm soc}^{\rm wo}({u}) \les J_{\rm soc}^{\rm wo}(\check{u})$ which is bounded in its $L^{2}$-norm by Step 2, namely $||u||_{L^{2}}^{2} \les cN$ with $c$ independent of $N.$ Now, by Steps 3 and 4, for all such perturbed $u$,
\begin{equation}
	\label{eq59b}
	\begin{aligned}J_{\rm soc}^{\rm wo}({u})-J_{\rm soc}^{\rm wo}(\check{u})=&J_{\rm soc}^{0}(u-\check{u})+\sum_{i=1}^{N}\langle \mathcal{D}_{u_{i}}J_{\rm soc}^{\rm wo}(\check{u}), u_{i}-\check{u}_{i}\rangle \cr\ges& \gamma ||\delta u||^{2}_{L^{2}}+\sum_{i=1}^{N}\langle \mathcal{D}_{u_{i}}J_{\rm soc}^{\rm wo}(\check{u}), \delta u_{i}\rangle. \end{aligned}
	\end{equation}Moreover, by Cauchy-Schwarz inequality,
	\begin{equation}
	\label{eq53}
	\begin{aligned}
	&\sum_{i=1}^{N}\langle \mathcal{D}_{u_{i}}J_{\rm soc}^{\rm wo}(\check{u}), \delta u_{i}\rangle
	\les \sqrt{\sum_{i=1}^{N}\|\mathcal{D}_{u_{i}}J_{\rm soc}^{\rm wo}(\check{u})\|^2_{L^{2}}\sum_{i=1}^{N} \|\delta{u}_i\|^2_{L_2}} \\
	\les& c\sqrt{\sum_{i=1}^{N}O(\frac{1}{N})\|\check{u}\|_{L^2}^2\sum_{i=1}^{N} \|\delta{u}_i\|^2_{L_2}}=O(\sqrt{N})
	\end{aligned}
	\end{equation}                  
	where the last inequality is due to (\ref{eq25}) and Proposition \ref{13} of Step 1. Also, note that $\mathcal{D}
	_{u_{i}}\hat{J}_{i}(\check{u})=0$ for $i=1,\cdots, N$ due to the person-by-person optimality and Theorem \ref{thm4}. Thus, the asymptotic optimality (\ref{soc}) follows directly by (\ref{eq59b}) and (\ref{eq53}).  \end{proof}

\section{Concluding remarks}

This paper investigated mean field LQG social control with volatility uncertain common noise. 
Based on a two-step-duality technique, we construct an auxiliary optimal control problem. Through solving this problem combined with consistent mean field approximations, we design a set of decentralized strategies and verify their asymptotically social optimality. An interesting and challenging work for further study is to consider the indefinite $R$ or $R_0$; or state variable enters the term driven by $W_{i}.$

\end{document}